\newtheorem{thm}{Theorem} 
\newtheorem{cor}[thm]{Corollary}
\newtheorem{lem}[thm]{Lemma}
\newtheorem{prop}[thm]{Proposition}
\theoremstyle{remark}
\newtheorem{rem}{Remark}[section]
\numberwithin{equation}{section}
\newcommand{\fg}{{\mathfrak g}}
\newcommand{\fh}{{\mathfrak h}}
\newcommand{\fk}{{\mathfrak k}}
\newcommand{\fl}{{\mathfrak l}}
\newcommand{\fn}{{\mathfrak n}}
\newcommand{\fs}{{\mathfrak s}}
\newcommand{\fgl}{{\mathfrak{gl}}}
\newcommand{\fsl}{{\mathfrak{sl}}}
\newcommand{\fso}{{\mathfrak{so}}}
\newcommand{\fsp}{{\mathfrak{sp}}}
\renewcommand{\L}{{\text{L}}}
\newcommand{\bbC}{{\mathbb C}}
\newcommand{\bbN}{{\mathbb N}}
\newcommand{\la}{\lambda}
\newcommand{\mult}{{\operatorname{mult}}}
\newcommand{\Hom }{{\operatorname{Hom }}}
\newcommand{\End }{{\operatorname{End }}}
\newcommand{\Res }{{\operatorname{Res }}}
\renewcommand{\S   }{{\operatorname{S   }}}
\newcommand{\GL}{{\text{GL}}}
\newcommand{\SL}{{\text{SL}}}
\newcommand{\SO}{{\text{SO}}}
\newcommand{\Sp}{{\text{Sp}}}
\newcommand{\G}{{\text{G}}}
\newcommand{\K}{{\text{K}}}
\newcommand{\M}{{\text{M}}}
\newcommand{\F}{{\text{F}}}
\renewcommand{\S}{{\text{S}}}
\newcommand{\UK}{{\text{U}_{\text{K}}}}
\newcommand{\US}{{\text{U}_{\text{S}}}}
\newcommand{\TK}{{\text{T}_{\text{K}}}}
\newcommand{\TS}{{\text{T}_{\text{S}}}}
\begin{document}
\title[Lowest principal sl(2)-types]{
Lowest $\fsl(2)$-types in $\fsl(n)$-representations \\
        with respect to a principal embedding}
\author[H.~Lhou]{Hassan Lhou}
\address[Hassan Lhou]{Department of Mathematical Sciences\\
         University of Wisconsin - Milwaukee\\
         3200 North Cramer Street\\
         Milwaukee, WI 53211}
\email[H.~Lhou]{hlhou@uwm.edu}
\author[J.~F.~Willenbring]{Jeb F. Willenbring}
\address[Jeb F.~Willenbring.]{Department of Mathematical Sciences\\
         University of Wisconsin - Milwaukee\\
         3200 North Cramer Street \\
         Milwaukee, WI 53211}
\email[J.~F.~Willenbring]{jw@uwm.edu}
\thanks{The second author was supported by the National Security Agency grant \# H98230-09-0054.}

\date{\today}

\subjclass{Primary 17B10; Secondary 05E10, 22E46}

\abstract{Fix $n \geq 3$.  Let $\fs$ be a principally embedded $\fsl_2$}-subalgebra in $\fsl_n$.  A special case of results of the second author and Gregg Zuckerman implies that there exists a positive integer $b(n)$ such that for any finite dimensional irreducible $\fsl_n$-representation, $V$, there exists an irreducible $\fs$-representation embedding in $V$ with dimension at most $b(n)$.  We prove that $b(n)=n$ is the sharpest possible bound.  We also address embeddings other than the principal one.

The exposition involves an application of the Cartan--Helgason theorem, Pieri rules, Hermite reciprocity, and a calculation in the ``branching algebra'' introduced by Roger Howe, Eng-Chye Tan, and the second author.
}
\maketitle

\section{Introduction}\label{sec_Introduction}

For a positive integer $n=d+1$, the complex irreducible representation of $\fsl_2=\fsl_2(\bbC)$ with dimension $n$ will be denoted $\F_d$.  By definition, the action of $\fsl_2$ on $\F_d$ defines a homomorphism
\[
    \pi: \fsl_2 \rightarrow \End(\F_d).
\]

Furthermore, upon fixing an ordered basis for $\F_d$ we obtain an identification $\End(\F_d) \cong \fgl_n$.  The kernel of $\pi$ is trivial since $\fsl_2$ is a simple Lie algebra.  The image of $\pi$, denoted $\fs$, is therefore isomorphic to $\fsl_2$.  We will refer to $\fs$ as a \emph{principal} $\fsl_2$-subalgebra of $\fgl_n$.

Once again we observe that $\fs$ is simple, and therefore intersects the center of $\fgl_n$ trivially.  Note that here we are require $n>1$.  Hence, $\fs \subseteq \fsl_n$.

The embeddings of $\fsl_2$ into a Lie algebra is fundamental to representation theory.
In general, a subalgebra $\fl$ of a semisimple Lie algebra $\fg$ is said to be a \emph{principal $\fsl_2$-subalgebra} if $\fl \cong \fsl_2$ and it contains a
regular nilpotent element of $\fg$ (\cite{Dynkin-semisimple}, \cite{GOV},
\cite{Kostant}).  These subalgebras are conjugate, so we
sometimes speak of ``the'' principal $\fsl_2$-subalgebra.

There is a beautiful connection between the principal $\fsl_2$-subalgebra and the (co)homology of the corresponding connected, simply connected, compact Lie group (see \cite{Kostant} and the exposition in \cite{CM}).  One considers the decomposition of the adjoint representation when restricted to a principal $\fsl_2$.  In the special case of $\fsl_n$, one obtains that the group of special unitary matrices,  $\text{SU}(n)$, has the same homology as the product of spheres of dimension
$3, 5, 7, \cdots, 2n-1$.

In light of this fact, one considers decomposing representations other than the adjoint.  Even in the case of $\fsl_n$, this is a difficult problem.  Indeed, included in the representations of $\fsl_n$ are the symmetric powers of $\F_d$.  An ``explicit`` decomposition was the subject of nineteenth century invariant theory of binary forms (see  \cite{Howe-bin}).  In the case of the cubic and quartic forms, there is a rich literature (see \cite{VW} and the references therein).
\bigskip
Put another way, composing the principal embedding with an arbitrary finite dimensional representation irreducible representation (irrep. for short) of $\fsl_n$:
\[
    \fsl_2 \hookrightarrow \fsl_n \hookrightarrow \End(\text{V})
\]
is an instance of \emph{plethysm}.  Decomposing V under $\fsl_2$ explicitly is a subject filled with unsolved problems.

Suppose, for example, that
\[
    V = V_0 \oplus V_1 \oplus V_2 \oplus V_3 \oplus \cdots
\] where $V_j$ is the isotypic component of the $\fsl_2$-irrep. $\F_j$.  We can ask,
\emph{When is $V_j \neq (0)$ for an arbitrary finite dimensional irreducible representation ?}  For the ``highest'', that is $\max \{ j : V_j \neq (0) \}$,
 component this question is not difficult since it is the restriction of the highest weight of V to the Cartan subalgebra of $\fsl_2$.  However, the ``lowest'' component (i.e. $\min \{ j : V_j \neq (0) \}$) is more complicated to determine.

This motivates

\begin{thm}\label{thm_main}  Fix $n \geq 3$, and a principal $\fsl_2$-subalgebra, $\fs$, of $\fsl_n$.  Let $V$ denote an arbitrary finite dimensional complex irreducible representation of $\fsl_n$.  Then, there exists $0 \leq d < n$ such that upon restriction to $\fs$, V contains the $\fs$-irrep. $\F_d$ in the decomposition.
\end{thm}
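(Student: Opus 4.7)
My plan is to reduce the statement to an analysis of weight multiplicities of the principal $\fs$-action on $V^\la$. Writing $V^\la|_\fs = \bigoplus_d a_d \F_d$, the unimodality of $\fsl_2$-weight spaces yields $a_d = \dim(V^\la)_d - \dim(V^\la)_{d+2}$ for $d \geq 0$; equivalently, $a_d$ counts the Jordan blocks of size $d+1$ in the action of a regular nilpotent $e \in \fs$ on $V^\la$. The theorem thus asserts that such a regular nilpotent has at least one Jordan block of size $\leq n$ on $V^\la$. The generating function $\sum_d \dim(V^\la)_d\, q^d$ equals the principal specialization $s_\la(q^{n-1}, q^{n-3}, \ldots, q^{-(n-1)})$ of the Schur polynomial, which admits a closed product form via the hook-content formula---this is the combinatorial input for the analysis.

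I would first dispose of the ``generator'' cases. For row partitions $\la = k\omega_1$, Hermite reciprocity yields $V^\la|_\fs = \S^k \F_{n-1} \cong \S^{n-1}\F_k$, a plethysm of bounded outer degree $n - 1$ whose lowest $\fs$-type has highest weight $\leq n - 1$ by classical binary-form analysis (Cayley--Sylvester). For column partitions $\la = \omega_k$ with $1 < k < n - 1$, the character of $\Lambda^k \F_{n-1}$ is a Gaussian $q$-binomial whose lowest $\fs$-type has highest weight $\leq 2$; the extremal cases $k = 1, n - 1$ give lowest type $\F_{n-1}$, matching the bound exactly. Finally, for $\la$ in the Cartan--Helgason spherical family for the symmetric pair $(\SL_n, \K)$---with $\K = \SO_n$ for $n$ odd and $\K = \Sp_n$ for $n$ even---the principal $\fs \subseteq \K$ fixes the $\K$-invariant line in $V^\la$, producing $\F_0 \subseteq V^\la|_\fs$ for such $\la$.

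To propagate the base cases to arbitrary $\la$, I would use the branching algebra $\mathcal{B}$ for $(\SL_n, \fs)$: a bigraded commutative algebra with $\dim \mathcal{B}_{\la, d} = \mult(\F_d, V^\la|_\fs)$. Using Pieri rules together with the base cases, I would exhibit a finite generating set lying in pieces $\mathcal{B}_{\mu, d_\mu}$ with $d_\mu \leq n - 1$, and argue that for every $\la$ some product of generators lies in $\mathcal{B}_{\la, d}$ with $d < n$. The main obstacle is this last step: multiplication in $\mathcal{B}$ respects grading so that $\mathcal{B}_{\la_1, d_1} \cdot \mathcal{B}_{\la_2, d_2} \subseteq \mathcal{B}_{\la_1 + \la_2, d_1 + d_2}$, and hence naive products of low-$d$ generators can produce elements of much larger total $d$. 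Overcoming this requires exploiting the multiple decompositions of $\la$ as a positive combination of generator weights (each giving a different achievable $d$), together with the relations among generators in $\mathcal{B}$, so that at least one element of small $d$ survives in $\mathcal{B}_{\la, d}$. This detailed branching-algebra computation, combined with Pieri-rule bookkeeping, is where the real work of the proof would lie.
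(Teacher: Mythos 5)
Your proposal assembles most of the right ingredients (Hermite reciprocity, Cartan--Helgason, Pieri rules, the branching algebra), but the step you yourself flag as ``where the real work would lie'' is a genuine gap, and it is not one that can be closed along the lines you sketch. The bigrading $\mathcal B_{\la_1,d_1}\cdot\mathcal B_{\la_2,d_2}\subseteq\mathcal B_{\la_1+\la_2,d_1+d_2}$ means the $\fsl_2$-degrees of generators \emph{add}, so the only factors you can use freely are $\fs$-invariants ($d=0$). The invariants supplied by Cartan--Helgason live only over the spherical cone ($\sum 2\bbN\omega_i$ for $\SO_n$, $\sum_{i\ \mathrm{even}}\bbN\omega_i$ for $\Sp_n$), and an arbitrary dominant $\la$ does not differ from that cone by one of finitely many base cases: already $\la=c_1\omega_1+c_3\omega_3$ with both coefficients odd and large escapes every decomposition into your generators without the $d$-degrees piling up past $n$. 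So the ``relations among generators'' you appeal to would have to do all the work, and you give no mechanism for them.

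The paper avoids this entirely by not building $\la$ additively. It inserts the intermediate symmetric subgroup $\K=\Sp_n$ ($n$ even) or $\SO_n$ ($n$ odd) and uses the Pieri rule for $\S^d(\bbC^n)\otimes\L(\mu)^*$ (resp.\ $\wedge^k(\bbC^n)\otimes\L(\nu)^*$): one can always add a horizontal (resp.\ vertical) strip to any Young diagram to make all columns (resp.\ rows) even, so by Cartan--Helgason \emph{every} irreducible $\L(\la)$ contains some single $\K$-irrep $\S^m(\bbC^n)$ or $\wedge^k(\bbC^n)$. Restricting just that one constituent to the principal $\fsl_2$ reduces the whole theorem to the lowest type of $\S^m(\F_{n-1})$, resp.\ $\wedge^k(\F_{n-1})\cong\S^k[\S^{n-k}(\bbC^2)]$, i.e.\ to binary forms. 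The branching-algebra multiplication is then used only there, and only with genuine invariants such as the degree-four invariant of each form, so no degree accumulation occurs. Two further cautions: the bound $\ell(m,d)\le d$ for binary forms is not an off-the-shelf Cayley--Sylvester fact --- it is the content of the paper's Section on binary forms, proved by a case-by-case analysis for $d\le 7$ plus a periodicity argument; and your claim that $\wedge^k(\F_{n-1})$ has lowest type of highest weight $\le 2$ for $1<k<n-1$ is slightly off (e.g.\ $\ell(3,d)=3$ for $d\equiv\pm1\bmod 4$), though the weaker bound actually needed still holds.
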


A proof is provided in Section \ref{sec_GeneralCase}, which reduces the problem to the theory of binary forms, which we consider in Section \ref{sec_BinaryForms}.  All notation and convensions are set up in Section \ref{sec_NotConv}.

Proving the general case involves an application of the Cartan-Helgason theorem applied to the orthogonal and symplectic subgroups of the special linear group, together with the Pieri rules for decomposing certain tensor products.  The special case of binary forms is handled using a case-by-case analysis utilizing Hermite reciprocity.  Also, a key ingredient is the fact that the subspace of highest weight vectors in the coordinate ring of a representation is a graded algebra.

It is worth noting that if the embedding of $\fsl_2$ is not \emph{principal} then Theorem \ref{thm_main} still holds.  This fact is elementary but we provide the details in Section \ref{sec_AnyEmb}.

\bigskip

One source of motivation for this result is as follows.  Let $\K$ is a reductive algebraic group with a small subgroup S.  By \emph{small} (see \cite{WZ}) we mean that there exists an integer $b$ such that when an arbitrary irrep. of K is restricted to S there exists an S-irrep. in the decomposition with dimension at most $b$.  If we assume that S is semisimple, then there are only finitely many irreps. with dimension less than $b$.  Thus, the dual of K may be partitioned into equivalence classes in the same sense as done in \cite{BGG} for (K,S) a symmetric pair corresponding to a split real group.  We provide more details concerning motivation in Section \ref{sec_motivation}.

\bigskip

\bigskip
\noindent{\bf Acknowledgement:}

A preliminary report of this work was announced during the Special Session on Analysis and Geometry on Lie Groups organized by Chal Benson and Gail Ratcliff.  This session was part of the Central Fall Sectional Meeting of the American Mathematical Society at the University of Wisconsin-Eau Claire.  The authors thank the organizers of this conference.

\section{Notation and conventions}\label{sec_NotConv}

We record here standard assumptions and notation used in the paper.
The ground field is, $\bbC$, the complex numbers.  Unless explicitly stated, all Lie algebras and representations are assumed to be finite dimensional and over $\bbC$.

If $m$ is a positive integer and $V$ is a vector space then we write
\[
    \begin{array}{llc}
        m V & := & \underbrace{V \oplus V \oplus \cdots \oplus V} \\
            &    & \text{$m$ copies}.
    \end{array}
\]  If $m=0$ then $mV = (0)$.  Thus, $m V = V \otimes \bbC^m$.  Here (and throughout) we are tensoring over the ground field, $\bbC$.

\bigskip

If G is a group (resp. Lie algebra), which acts on a vector space $V$, we will denote the subspace of point-wise fixed vectors (i.e. invariant vectors) by $V^{\G}$.  If G acts on both $V_1$ and $V_2$ then  G acts on $\Hom(V_1,V_2)$ by $g \cdot T:=gTg^{-1}$.  The G-invariant vectors in $\Hom(V_1,V_2)$, are exactly the G-equivariant homomorphisms, denoted $\Hom_{\G}(V_1, V_2)$.  We define:
\[
    \mult_\G(V_1:V_2) := \dim \Hom_{\G}(V_1, V_2).
\]  (If G is understood, it will be omitted.)
Note that in the case that $V_1$ (resp. $V_2$) is irreducible and $V_2$ (resp. $V_1$) is completely reducible then $\mult(V_1, V_2)$ is the \emph{multiplicity} of $V_1$ (resp. $V_2$) in $V_2$ (resp. $V_1$).

Fix $\{ V_\la:  \la \in \widehat{\G} \}$ to be distinct representatives of the equivalence classes of irreducible representations of G, with index set $\widehat{\G}$.  Then, for an arbitrary completely reducible G-representation, $V$,
\[
    V \cong \bigoplus_{\la \in \widehat{G}} m_\la V_\la
\] where the non-negative integers $m_\la$ are the multiplicities, that is
$m_\la = \mult(V_\la : V)$.

If G is a subgroup of a larger group H, and $V$ is an irreducible H-representation then $V$ becomes a G-representation under the restricted action, which we denote by $\Res^\text{H}_\G V$.  Then, the numbers
\[
    \mult_\G( V_\la : \Res^\text{H}_\G V).
\]
are sometimes called the \emph{branching multiplicities}.  An ``explicit'' description of these numbers is called a \emph{branching rule} (or law).
The problem of describing these numbers is connected to the topic of \emph{symmetry breaking} in the physics literature.

\subsection{Lie theoretic setup}

Let $\fk$ denote a rank $r$ simple Lie algebra, and fix a Cartan subalgebra $\fh$.  Denote the root system determined by ($\fk,\fh$) by $\Phi \subset \fh^*$.  For
$\alpha \in \Phi$, let $\fk_\alpha \subseteq \fk$ denote the $\alpha$ root space.

Choose a set of positive roots $\Phi^+$ and set $\fn^+$ be the sum of $\fk_\alpha$ for positive $\alpha$.  The simple roots in $\Phi^+$ will be denoted by
$\Pi = \{\alpha_1, \cdots, \alpha_r \} $.  Let $\omega_1, \cdots, \omega_r$ be the fundamental weights.  The lattice of $\fk$-integral weights is then denoted
$P(\fk)$, while the cone of dominant $\fk$-integral weights is
$P_+(\fk) = \sum_{j=1}^r \bbN \omega_j$ where $\bbN = \{0,1,2,3,\cdots\}$ is the set of non-negative integers.

Set $\mathfrak b = \fh \oplus {\fn}^+$.  For $\la \in P_+(\fk)$ let $\L_{\fk}(\la)$ denote the highest weight representation with highest weight $\la$.

There exists a unique (up to isomorphism) simply connected algebraic group K with Lie algebra $\fk$.  Let $T_K$ denote the maximal algebraic torus in $K$ with Lie algebra $\fh$ and let $U_K$ denote the maximal unipotent group with Lie algebra $\fn^+$.

The irreducible finite dimensional representations of K$\times$K are, up to equivalence, of the form $V_1 \otimes V_2$ where $V_j$ is an irreducible representation of K (with the standard action of K$\times$K on tensors).

Let $\bbC[\K]$ denote the coordinate ring of regular functions on $K$.  The group $K$ acts on itself by left and right multiplication.  Thus, $\bbC[\K]$ is an infinite dimensional representation of K$\times$K with respect to the action defined by $[(k_l, k_r) \cdot f](x) = f(k_l^{-1}xk_r)$.  We have the (algebraic version of the) Peter-Weyl decomposition:
\[
    \bbC[\K] \cong \bigoplus_{\la \in P_+(\fk)} L_K(\la)^* \otimes L_K(\la).
\]

\subsection{Small subalgebras}

Let $\fs$ denote a simple subalgebra of $\fk$.  As we did for $\fk$, we fix a Cartan subalgebra for $\fs$, as well as a choice of positive roots.  Let $P_+(\fs)$ denote the cone of dominant $\fs$-integral weights.  For $\mu \in P_+(\fs)$, we denote the (irreducible finite dimensional) highest weight $\fs$-representation by  $\L_{\fs}(\mu)$.

For $\la \in P_+(\fk)$ define
\[
    \min_\fs \dim(\la) := \min \{ \dim \L_\fs(\mu) :  \mu \in P_+(\fs) \mbox{ and }
    \mult( L_\fs(\mu) : L_\fk(\la))>0 \}.
\]  If the Lie algebra $\fs$ is understood then we will simply write $\min \dim(\la)$.

We wish to compute
\[
    b(\fk,\fs) := \max_{\la \in P_+(\fk)} \min_\fs \dim (\la)
\]
provided a maximum exists.  If no maximum exists then we write
$b(\fk,\fs) = \infty$.  In the former case ($b(\fk,\fs) < \infty$) we say that $\fs$ is \emph{small}\footnote{This notation was introduced in \cite{WZ}.} in $\fk$.  When $\fs$ is small in $\fk$ then we would like to know a sharp bound.

\subsection{A branching algebra}

There exists a connected, Zariski closed, subgroup, S, of K with Lie algebra $\fs$.
Let $\TS$ (resp. $\US$) denote the corresponding maximal torus (resp. unipotent subgroup) in $S$.

We restrict the action of $\K \times \K$ on $\bbC[\K]$ to $S \times K$.  That is, we restrict the left translation by K to that of S.  Define:
\[
    \mathcal B^\K_\S := \bbC[\K]^{\US \times \UK}.
\]
For each of the groups S and K, the maximal torus normalizes the unipotent group.  Thus,
$\TS \times \TK$ acts on these unipotent invariants.  In this light, we obtain a gradation by the lattice cone $P_+(\fs) \times P_+(\fk)$.  Indeed, $\mathcal B^\K_\S$ consists of the highest weight vectors for the $\S \times \K$-action on $\bbC[\K]$.  Let
$\mathcal B^{\K,\la}_{\S,\mu}$ denote the $T_S \times T_K$-eigenspace.  We have
\[
    \mathcal B^{\K}_{\S} = \bigoplus \mathcal B^{\K,\la}_{\S,\mu}
\] where the sum is over $(\la,\mu) \in P_+(\fs) \times P_+(\fk)$.  This is an algebra gradation.

\begin{prop}\label{prop_branch}  Let $\la_1, \la_2 \in P_+(\fk)$ and $\mu_1, \mu_2 \in P_+(\fs)$.  If for $j = 1$ and $j=2$,
\[
    \mult_\S( \L_\S(\mu_j): \Res^\K_\S \L_\K(\la_j) ) > 0,
\]
then
\[
    \mult_\S( \L(\mu_1+\mu_2): \Res^\K_\S \L(\la_1 + \la_2) ) > 0,
\]
\end{prop}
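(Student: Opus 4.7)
The plan is to turn the proposition into a statement about nonvanishing of homogeneous pieces of the graded algebra $\mathcal B^\K_\S$ and then invoke the integral-domain property of $\bbC[\K]$. I would first establish a dictionary between the branching multiplicities appearing in the statement and the dimensions $\dim \mathcal B^{\K,\la}_{\S,\mu}$. Starting from the Peter-Weyl decomposition stated in the paper, taking $\UK$-invariants on the right factor picks out the one-dimensional highest weight line of each $\L_\K(\la)$, which carries $\TK$-weight $\la$. This identifies
\[
    \bigl(\bbC[\K]^{\UK}\bigr)_\la \cong \L_\K(\la)^*
\]
as an $\S$-module, where $\S$ acts through the restricted left translation. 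Taking further $\US$-invariants of $\TS$-weight $\mu$ then gives a space of dimension $\mult_\S(\L_\S(\mu) : \Res^\K_\S \L_\K(\la)^*)$; applying the involution $\mu \mapsto \mu' := -w_0^\S \mu$ on $P_+(\fs)$ rewrites this as $\mult_\S(\L_\S(\mu') : \Res^\K_\S \L_\K(\la))$. In particular, the hypothesis $\mult_\S(\L_\S(\mu_j) : \Res^\K_\S \L_\K(\la_j)) > 0$ is equivalent to $\mathcal B^{\K,\la_j}_{\S,\mu_j'} \neq (0)$ for $j = 1,2$.

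Next, since $\K$ is simply connected it is irreducible as an algebraic variety, so $\bbC[\K]$ is an integral domain, and hence so is the subalgebra $\mathcal B^\K_\S$. Pick nonzero $f_j \in \mathcal B^{\K,\la_j}_{\S,\mu_j'}$ for $j = 1, 2$. The product $f_1 f_2$ is nonzero by the domain property and lies in $\mathcal B^{\K,\la_1+\la_2}_{\S,\mu_1'+\mu_2'}$ because the decomposition into $\TS \times \TK$-eigenspaces is compatible with multiplication. Since $-w_0^\S$ is linear, $\mu_1' + \mu_2' = (\mu_1+\mu_2)'$, so the dictionary from the first step translates the nonvanishing of this homogeneous piece back into $\mult_\S(\L_\S(\mu_1+\mu_2) : \Res^\K_\S \L_\K(\la_1+\la_2)) > 0$, which is the desired conclusion.

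The only real subtlety is the Peter-Weyl bookkeeping in the first step, which determines on which factor of $\L_\K(\la)^* \otimes \L_\K(\la)$ the group $\S$ acts, and hence introduces the dualization $\mu \mapsto -w_0^\S \mu$. This causes no essential difficulty, because $-w_0^\S$ preserves $P_+(\fs)$ and is additive; indeed, when $\fs = \fsl_2$ (the setting most relevant to this paper) every irrep is self-dual, so the involution is trivial. Beyond that, the argument is the one-line observation that in a graded integral domain a product of two nonzero homogeneous elements is a nonzero homogeneous element whose degree is the sum of the degrees of the factors.
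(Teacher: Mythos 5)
Your proof is correct and follows essentially the same route as the paper: identify branching multiplicities with dimensions of graded components of $\mathcal B^\K_\S$, then use that $\mathcal B^\K_\S$ is a graded subalgebra of the integral domain $\bbC[\K]$ so that a product of nonzero homogeneous elements is nonzero in the component indexed by the sum of the weights. The only difference is that you spell out the Peter--Weyl bookkeeping and the resulting twist $\mu \mapsto -w_0^{\fs}\mu$, which the paper elides; since that involution is additive and preserves $P_+(\fs)$, this extra care changes nothing in the conclusion.
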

\begin{proof}
The dimension of the graded components of the branching algebra are equal to the branching multiplicities.  Since K is connected, the branching algebra is a subalgebra of the integral domain $\bbC[\K]$, and therefore has no zero divisors.
\end{proof}

It is worth pointing out that the proposition is particularly useful when we write $\mu_2 = \mu$ and consider the special case $\mu_2 = 0$.

\begin{cor}\label{cor_branch}  For $\la_1, \la_2 \in P_+(\fk)$ and $\mu \in P_+(\fs)$.
Assume $\L_\K(\la_2)^\S \neq (0)$.  Then if $\L_\S(\mu)$ occurs in $\L_\K(\la_1)$ as an S-representation, then $\L_S(\mu)$ occurs in
\[
    \L_K(\la_1 + \la_2), \;\; \L_K(\la_1 + 2\la_2), \;\; \L_K(\la_1 + 3\la_2), \;\; \cdots
\]
\end{cor}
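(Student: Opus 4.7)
The plan is to extract the corollary directly from Proposition \ref{prop_branch} by specializing one of the two pairs $(\lambda_j,\mu_j)$ to the trivial weight of $\fs$, and then iterating.

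First, I would translate the hypothesis $\L_\K(\la_2)^\S \neq (0)$ into the statement that the trivial $\fs$-representation appears in $\L_\K(\la_2)$ upon restriction, that is,
\[
    \mult_\S(\L_\S(0) : \Res^\K_\S \L_\K(\la_2)) > 0.
\]
Combined with the other given nonvanishing $\mult_\S(\L_\S(\mu) : \Res^\K_\S \L_\K(\la_1)) > 0$, Proposition \ref{prop_branch} applied with $(\la_1,\mu_1)=(\la_1,\mu)$ and $(\la_2,\mu_2)=(\la_2,0)$ yields
\[
    \mult_\S(\L_\S(\mu+0) : \Res^\K_\S \L_\K(\la_1+\la_2)) > 0,
\]
which is the first case $\L_\K(\la_1+\la_2)$.

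Next, I would induct on $k \geq 1$ to obtain the general case $\L_\K(\la_1 + k\la_2)$. The base case $k=1$ is the preceding paragraph. For the inductive step, assume $\L_\S(\mu)$ occurs in $\L_\K(\la_1 + k\la_2)$. Applying Proposition \ref{prop_branch} again with $(\la_1,\mu_1)$ replaced by $(\la_1+k\la_2,\mu)$ and $(\la_2,\mu_2)=(\la_2,0)$ produces the occurrence in $\L_\K(\la_1+(k+1)\la_2)$, completing the induction.

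There is no real obstacle here; the content is entirely in Proposition \ref{prop_branch}, whose proof (using that the branching algebra sits inside the integral domain $\bbC[\K]$) has already been given. The only thing to be careful about is the purely notational point that an $\S$-fixed vector is exactly an occurrence of the trivial $\fs$-irreducible, so that the hypothesis on $\la_2$ really does fit into the format required by the proposition.
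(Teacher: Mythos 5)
Your proposal is correct and matches the paper's (implicit) argument: the corollary is stated as an immediate consequence of Proposition \ref{prop_branch} with $\mu_2 = 0$, which is exactly your specialization, and the iteration to $\L_\K(\la_1 + k\la_2)$ is the obvious induction you describe. The only point worth making explicit, as you do, is that $\L_\K(\la_2)^{\S} \neq (0)$ means $\mult_\S(\L_\S(0) : \Res^\K_\S \L_\K(\la_2)) > 0$, so the hypothesis fits the format of the proposition.
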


\section{Binary forms}\label{sec_BinaryForms}

We now turn our attention to the case when $\fk = \fsl_n$ for $n \geq 3$ and $\fs$ is a principally embedded $\fsl_2$-subalgebra.  We order the fundamental weights $\omega_1, \cdots, \omega_{n-1}$ so that the highest weight of the irreducible representation $\wedge^k \bbC^n$ is $\omega_k$.

Our goal of this article is to prove that
\[
    b(\fsl_n,\fs)  = n.
\]
It is clear that this number is at least $n$ since $\L(\omega_1) \cong \bbC^n$ (the defining representation of $\fsl_n$), and upon restriction to $\fs \cong \fsl_2$ we have $\L(\omega_1) \cong \F_d$ with $\dim \F_d = d+1 = n$.  Recall that $\F_d \cong \S^d(\bbC^2)$ (the degree $d$ symmetric tensors on $\bbC^2$).

As a first step, we consider the problem of computing $\min_\fs \dim (m \omega_1)$ for $d\geq 1$.  As a representation of $\fsl_n$, the space of degree $m$ symmetric tensors, $\S^m(\bbC^n)$, is equivalent to $\L(m \omega_1)$.  Our question is therefore:  What is the minimal dimension of an irreducible $\fsl_2$-representation occurring in the decomposition of
\[
    \S^m[\S^d(\bbC^2)] \cong a_1 \F_1 \oplus a_2 \F_2 \oplus a_3 \F_3 \oplus \cdots.
\]
Here $a_j$ is the multiplicity of $\F_j$.  We need to compute the \emph{lowest} $j$ with a positive multiplicity.  That is, define
\[
    \ell(m,d) := \min \left\{j : a_j > 0 \right\}
\]
for non-negative $m$ and $d$.  Note that we use the letter $\ell$ as it is short from \emph{lowest} $\fsl_2$-type.

\subsection{The $\fsl_2$-character of a  symmetric power}

The representation, $\pi :\SL_2 \rightarrow \text{GL}(V)$ has character
\[
    \begin{array}{cccc}
        \chi_V : &  \text{SL}_2 & \rightarrow & \bbC \\
                &  g & \mapsto & \text{Trace} \pi(g).
    \end{array}
\]  Henceforth, we will let $\chi_V(q)$ denote the value of the character on the diagonal matrix $\left[ \begin{array}{cc} q & 0 \\ 0 & q^{-1} \end{array} \right]$.
Furthermore, we let $\chi_d$ denote the character of $V=\F_d$.  Observe that from the theory of $\fsl_2$-representation we have
\[
    \chi_d(q) = q^{-d} + q^{-d+2} + \cdots + q^{d-2} + q^d.
\]

Consequently, in the case that $V = \S^m[\S^d(\bbC^2)]$, the character is the coefficient of $t^m$ in the Taylor series expansion of
\[
    \frac{1}{(1-t q^d) (1-t q^{d-2}) \cdots (1-t q^{-d})}.
\]
around $t=0$.  As is often done, we will denote this coefficient
$\binom{m+d}{d}_q$,
since the value at $q=1$ is the corresponding binomial coefficient.

The problem of decomposing $\S^m[\S^d(\bbC^2)]$ is equivalent to expanding
\[
\binom{m+d}{m}_q = a_1 \chi_1 + a_2 \chi_2 + a_3 \chi_3 + \cdots
\]
which for fixed $m$ and $d$ can easily be done.  Once the expansion is given, it is easy to obtain $\min_{a_i > 0} i$.

In the following matrix\footnote{Computed in MAPLE} we record $\ell(m,d)$ in the row $m = 0,1,2,\cdots,16$ and column $d = 0,1,2,3,\cdots,16$.
\[
\begin{array}{c||ccccccccccccccccc} \ell
  & 0& 1& 2& 3& 4& 5& 6& 7& 8& 9&10&11&12&13&14&15& \cdots \\ \hline \hline
 0& 0& 0& 0& 0& 0& 0& 0& 0& 0& 0& 0& 0& 0& 0& 0& 0& \cdots \\
 1& 0& 1& 2& 3& 4& 5& 6& 7& 8& 9&10&11&12&13&14&15& \cdots \\
 2& 0& 2& 0&2& 0&2& 0&2& 0&2& 0&2&0&2&0&2& \cdots \\
 3& 0& 3&2&3&0&3&2&3&0&3&2&3&0&3&2&3& \cdots \\
 4& 0& 4&0&0&0&0&0&0&0&0&0&0&0&0&0&0& \cdots \\
 5& 0& 5&2&3&0&1&2&1&0&1&2&1&0&1&2&1& \cdots \\
 6& 0& 6&0&2&0&2&0&2&0&2&0&2&0&2&0&0& \cdots \\
 7& 0& 7&2&3&0&1&2&1&0&1&2&1&0&1&0&1& \cdots \\
 8& 0& 8&0&0&0&0&0&0&0&0&0&0&0&0&0&0& \cdots \\
 9& 0& 9&2&3&0&1&2&1&0&1&0&1&0&1&0&1& \cdots \\
10& 0&10&0&2&0&2&0&2&0&0&0&0&0&0&0&0& \cdots \\
11& 0&11&2&3&0&1&2&1&0&1&0&1&0&1&0&1& \cdots \\
12& 0&12&0&0&0&0&0&0&0&0&0&0&0&0&0&0& \cdots \\
13& 0&13&2&3&0&1&2&1&0&1&0&1&0&1&0&1& \cdots \\
14& 0&14&0&2&0&2&0&0&0&0&0&0&0&0&0&0& \cdots \\
15& 0&15&2&3&0&1&0&1&0&1&0&1&0&1&0&1& \cdots \\
\vdots & \vdots & \vdots & \vdots & \vdots & \vdots &
\vdots & \vdots & \vdots & \vdots & \vdots & \vdots &
\vdots & \vdots & \vdots & \vdots & \vdots & \ddots
\end{array}
\]

A first observation is that the above matrix is symmetric.  An important tool for our solution to this problem is the \emph{Hermite Reciprocity} (see \cite{GW})
\begin{thm}
As $\fsl_2$-representations,
\[
    \S^m[\S^d(\bbC^2)] \cong \S^d[\S^m(\bbC^2)]
\] for all $m, d \in \bbN$.
\end{thm}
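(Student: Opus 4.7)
The plan is to reduce the claimed isomorphism to an identity of $\fsl_2$-characters, since finite dimensional $\fsl_2$-representations are completely reducible and determined up to isomorphism by their weight multiplicities. What needs proving, therefore, is the $q$-polynomial identity
\[
    \chi_{\S^m[\S^d(\bbC^2)]}(q) \;=\; \chi_{\S^d[\S^m(\bbC^2)]}(q).
\]

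The natural route is through the generating function already on record in the preceding discussion. Since $\F_d = \S^d(\bbC^2)$ has $\fsl_2$-weights $q^d, q^{d-2}, \ldots, q^{-d}$ under $\mathrm{diag}(q,q^{-1})$, the trace on $\S^m[\F_d]$ is the coefficient of $t^m$ in $\prod_{j=0}^{d}(1-tq^{d-2j})^{-1}$. I would then convert this product into a classical Gaussian binomial coefficient via the substitution $Q = q^{-2}$, $s = t q^{d}$, which rewrites it as $\prod_{j=0}^{d}(1-sQ^{j})^{-1}$ and hence identifies the character with $q^{dm}\binom{m+d}{m}_{q^{-2}}$, where $\binom{m+d}{m}_Q$ denotes the standard Gaussian $q$-binomial. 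Running the analogous computation with the roles of $m$ and $d$ interchanged yields $q^{dm}\binom{m+d}{d}_{q^{-2}}$.

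The last step is to invoke the symmetry $\binom{m+d}{m}_{Q} = \binom{m+d}{d}_{Q}$ of the Gaussian binomial, which is immediate from the closed form $[m+d]_{Q}!/([m]_{Q}!\,[d]_{Q}!)$; combined with the $(m,d)$-symmetry of the prefactor $q^{dm}$, this gives equality of the two characters and hence the desired isomorphism. The only delicate point is the bookkeeping of the substitution $Q = q^{-2}$, $s = t q^{d}$, which is needed to shift the centered weight range $q^{-d}, \ldots, q^{d}$ to the standard range $Q^{0}, \ldots, Q^{d}$ that appears in the definition of the Gaussian binomial; modulo that, the argument reduces to a transparent classical $q$-identity. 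A more conceptual alternative would proceed via $(\GL_{2},\GL_{n})$ Howe duality applied to $\bbC^{2}\otimes\bbC^{n}$, realizing both sides of the isomorphism as the same isotypic component of a common polynomial algebra, but the generating function route is the most economical for the present exposition.
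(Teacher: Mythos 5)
Your proposal is correct and follows essentially the same route as the paper: both reduce the isomorphism to equality of $\fsl_2$-characters and identify each character with a symmetric $q$-binomial coefficient extracted from the product $\prod_{j=0}^{d}(1-tq^{d-2j})^{-1}$. You merely make explicit the change of variables relating the centered weight product to the standard Gaussian binomial, a step the paper absorbs into its own definition of $\binom{m+d}{d}_q$.
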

\begin{proof} The result follows from the fact that the two sides have the same character. That is, for any non-negative $m$ and $d$,
\[
    \left[ \begin{array}{c} m + d \\ d \end{array} \right]_q =
    \left[ \begin{array}{c} d + m \\ m \end{array} \right]_q.
\]
\end{proof}

Apart from Hermite reciprocity, there are many ``obvious'' aspects of this matrix. \begin{prop}\label{prop_odd}
If both $m$ and $d$ are odd then $\ell(m,d) \geq 1$,
\end{prop}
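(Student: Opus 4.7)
The plan is to exploit the action of the central element $-I \in \SL_2$ on $\S^m[\F_d]$. Since $\ell(m,d) \geq 1$ is exactly the statement that the trivial $\fs$-representation $\F_0$ does \emph{not} occur in $\S^m[\F_d]$, it suffices to exhibit an element of $\SL_2$ that acts by a non-identity scalar on $\S^m[\F_d]$ whenever both $m$ and $d$ are odd.

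First I would recall that on $\F_j$ the diagonal torus acts with weights $-j, -j+2, \ldots, j-2, j$, all congruent to $j$ modulo $2$. Setting $q = -1$ in the weight computation, this shows that the central element $-I = \mathrm{diag}(-1,-1) \in \SL_2$ acts on $\F_j$ as the scalar $(-1)^j$. In particular, $-I$ acts by $+1$ on $\F_0$ and by $-1$ on every $\F_j$ with $j$ odd.

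Next I would compute the scalar by which $-I$ acts on $\S^m[\F_d]$. On $\F_d$ the action is multiplication by $(-1)^d$, so on the $m$-fold symmetric power $\S^m[\F_d]$ the action is multiplication by $\bigl((-1)^d\bigr)^m = (-1)^{md}$. When both $m$ and $d$ are odd, this scalar equals $-1$.

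Finally, any irreducible summand $\F_j \subseteq \S^m[\F_d]$ must have $-I$ acting by the same scalar as on the ambient space. Thus $(-1)^j = -1$, so $j$ is odd; in particular $j \geq 1$, and $\ell(m,d) \geq 1$ as required. There is no serious obstacle here — the whole argument is a parity calculation using the center of $\SL_2$ — but it is worth noting that the same reasoning shows the stronger statement that $\ell(m,d)$ is itself odd in this regime, which matches the entries in the displayed table.
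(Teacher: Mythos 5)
Your proof is correct and is essentially the paper's argument in different clothing: the paper reduces the weights of $\S^m[\S^d(\bbC^2)]$ modulo $2$ to see that zero is not a weight, which is exactly your observation that the central element $-I$ acts by $(-1)^{md}=-1$. Your packaging via the center also records the (equivalent) stronger fact that every constituent $\F_j$ has $j$ odd, which the paper's mod-$2$ weight computation yields just as directly.
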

\begin{proof}
The weights of $\S^m[\S^d(\bbC^2)]$ are of the form
\[
    m_d d + m_{d-2} (d-2) + \cdots + m_{-d} (-d)
\] where $m_d + \cdots + m_{-d} = m$.  Reduce modulo 2 to see that zero is not a weight when $m$ and $d$ are both odd.  If a non-zero $\fsl_2$-invariant exists, then the zero weight space is non-trivial.
\end{proof}

Also, the zeroth row (resp. column) is 0 since the symmetric powers of the trivial representation, $\F_0$, are trivial.  The next row (resp. column) is given by $0,1,2,3,\cdots$ since the linear forms on $\F_d$ are precisely $\F_d$.

The subsequent rows (resp. columns) corresponding to the symmetric tensors on $\F_d$ for $2 \leq d \leq 7$ can be described using the branching algebra.  That is, we compute an initial segment of each row and then apply Corollary \ref{cor_branch} once we find an invariant.  We proceed on a case-by-case basis.

\subsection{Quadratic forms}

\begin{lem}\label{lem_d2}
    For any $d \in \bbN$,
\[ \ell(2,d) =
\left\{
  \begin{array}{ll}
    0, & \hbox{$d$ even;} \\
    2, & \hbox{$d$ odd.}
  \end{array}
\right.
\]
\end{lem}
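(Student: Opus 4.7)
The plan is to decompose $S^2(\F_d)$ explicitly as an $\fsl_2$-representation by combining Clebsch--Gordan with the standard symmetric/antisymmetric assignment, and then to read off the lowest type according to the parity of $d$. By Clebsch--Gordan,
\[
    \F_d \otimes \F_d \cong \F_{2d} \oplus \F_{2d-2} \oplus \cdots \oplus \F_0,
\]
and under the flip $v \otimes w \mapsto w \otimes v$ the summand $\F_{2d-2j}$ is acted on by $(-1)^j$. Extracting the $+1$-eigenspace gives
\[
    S^2(\F_d) \cong \bigoplus_{k \geq 0,\; 2d-4k \geq 0} \F_{2d-4k}.
\]

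With this decomposition in hand, the lemma reduces to reading off the minimum exponent $2d-4k$. If $d = 2e$ is even, then $k=e$ contributes a copy of $\F_0$, so $\ell(2,d)=0$. If $d = 2e+1$ is odd, then $2d-4k = 4e+2-4k \equiv 2 \pmod{4}$, so the smallest nonnegative value is $2$, attained at $k=e$; this yields a copy of $\F_2$ and shows $\ell(2,d) \leq 2$. To rule out $\F_0$ and $\F_1$ in the odd case, observe that every weight of $\F_d$ has the same parity as $d$, so every weight of $\F_d \otimes \F_d$ is even; hence no $\F_1$ occurs, and the absence of $\F_0$ is immediate from the displayed decomposition.

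The one step that is not purely formal bookkeeping is the symmetric/antisymmetric assignment within Clebsch--Gordan, and this will be the main obstacle to make rigorous. I would justify it either by the character identity $\chi_{S^2(\F_d)}(q) = \tfrac{1}{2}\bigl[\chi_{\F_d}(q)^2 + \chi_{\F_d}(q^2)\bigr]$, expanding both sides in the basis $\{\chi_i\}$ to recover the claimed alternating pattern, or by the classical fact that the unique (up to scale) $\fsl_2$-invariant bilinear form on $\F_d$ is symmetric precisely when $d$ is even, which anchors the sign of the lowest summand and propagates up by an easy induction on $j$. Either route is short, so I expect the overall proof to be quite compact.
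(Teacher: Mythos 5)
Your proof is correct, but it takes a genuinely different route from the paper's. You obtain the full plethysm
\[
    \S^2(\F_d) \;\cong\; \F_{2d} \oplus \F_{2d-4} \oplus \F_{2d-8} \oplus \cdots
\]
directly from Clebsch--Gordan together with the sign of the flip on each summand of $\F_d \otimes \F_d$, and then read off the lowest constituent from the congruence $2d-4k \equiv 2d \pmod 4$; of your two suggested justifications for the sign pattern, the character identity $\chi_{\S^2 V}(q)=\tfrac12\bigl[\chi_V(q)^2+\chi_V(q^2)\bigr]$ is the one to use, since $\sum_j(-1)^j\chi_{2d-2j}(q)$ telescopes to $\chi_d(q^2)$ and the claimed decomposition falls out immediately (the bilinear-form-plus-induction route is vaguer). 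The paper argues differently: via Hermite reciprocity it works with $\S^m[\S^2(\bbC^2)]$, computes the first few entries of the row by hand, takes powers of the degree-two invariant of the binary quadratic to get invariants in all even degrees, and multiplies by the highest weight vector of $\S^1[\S^2(\bbC^2)]\cong\F_2$ to get copies of $\F_2$ in all odd degrees, invoking the branching-algebra multiplicativity (Corollary \ref{cor_branch}) and the parity argument of Proposition \ref{prop_odd}. Your approach is more self-contained and stronger in its conclusion --- it gives the complete decomposition with multiplicities (each $\F_{2d-4k}$ occurring exactly once), not merely the lowest type --- whereas the paper's approach is the uniform template it reuses for degrees $3$ through $7$, where no closed-form plethysm is available. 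Both establish the lemma.
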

\begin{proof}
Row 2 (i.e. third from the top) consists of $0,2,0,2,0,2,\cdots$.  The fact that the first three entries are $0,1,0$ is a straightforward calculation.  We then observe that $\S^{2k}[\S^2(\bbC^2)]$ contains an invariant for all $k$ by taking powers of the invariant in the $k=1$ case.  Then, multiplying each of these invariants by the highest weight vector in $\S^1[\S^2(\bbC^2)]$.  Proposition \ref{prop_odd} implies that there are only invariants degree $2k$.
\end{proof}

\subsection{Cubic forms}
\begin{lem}\label{lem_d3}
For any $d \in \bbN$,
\[ \ell(3,d)=
\left\{
  \begin{array}{ll}
    0, & \hbox{$d \equiv 0 \mod 4$;} \\
    2, & \hbox{$d \equiv 2 \mod 4$;} \\
    3, & \hbox{$d \equiv \pm 1 \mod 4$.} \\
  \end{array}
\right.
\]
\end{lem}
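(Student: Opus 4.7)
The plan is to begin with Hermite reciprocity, which gives $\S^3[\S^d(\bbC^2)] \cong \S^d[\S^3(\bbC^2)] = \S^d(\F_3)$, so $\ell(3,d) = \ell(d,3)$; it therefore suffices to analyze the $\fs$-decomposition of $\S^d(\F_3)$.

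I would then fix $\fk = \fsl_4$ and let $\fs \subset \fk$ be the principal $\fsl_2$-subalgebra, so that $\L_\K(d\omega_1) \cong \S^d(\F_3)$ as $\fs$-module. The slice $\bigoplus_{d,j} \mathcal B^{\K, d\omega_1}_{\S, j}$ of the branching algebra from Section \ref{sec_NotConv} is bigraded with $(d,j)$-component of dimension $\mult(\F_j : \S^d(\F_3))$, and is canonically isomorphic to the classical ring of $\SL_2$-covariants of a binary cubic form. By the classical structure theorem for covariants of binary cubic forms (see, e.g., \cite{Howe-bin}), this ring is generated by four elements with bigradings $(d, j)$: the form $f$ at $(1,3)$, the Hessian $H$ at $(2,2)$, the Jacobian $T = (f,H)^{(1)}$ at $(3,3)$, and the discriminant $\Delta$ at $(4,0)$, subject to a single relation of the form $T^2 = \alpha H^3 + \beta f^2 \Delta$ with $\alpha, \beta \in \bbC^\times$. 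This relation permits reducing the exponent of $T$ modulo $2$, so a linear basis for the slice is $\{f^a H^b T^\epsilon \Delta^c : a, b, c \in \bbN,\, \epsilon \in \{0,1\}\}$, with $f^a H^b T^\epsilon \Delta^c$ carrying bigrading $(a + 2b + 3\epsilon + 4c,\; 3a + 2b + 3\epsilon)$.

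Consequently $\mult(\F_j : \S^d(\F_3))$ is the number of quadruples $(a, b, \epsilon, c)$ satisfying the two linear equations. Solving $3a + 2b + 3\epsilon = j$ case by case: no solution exists for $j = 1$; the unique solution $(0,0,0)$ for $j = 0$ forces $d = 4c$; the unique solution $(0,1,0)$ for $j = 2$ forces $d = 2 + 4c$; and the two solutions $(1,0,0)$ and $(0,0,1)$ for $j = 3$ force $d = 1 + 4c$ and $d = 3 + 4c$, respectively. Reading off the residue of $d$ modulo $4$ yields precisely the three cases of the lemma.

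The main obstacle is identifying the branching-algebra slice with the covariant ring of binary cubics and appealing to its classical presentation; with that in hand, the remainder is elementary Diophantine bookkeeping. If one preferred a more self-contained derivation, one could instead compute $\ell(d,3)$ for $d = 0, 1, 2, 3, 4$ by direct character expansion, propagate upper bounds via Corollary \ref{cor_branch} using the discriminant to supply $\fs$-invariants in $\L_\K(4\omega_1)$, and secure lower bounds from Proposition \ref{prop_odd} together with the classical fact that the $\fs$-invariants of $\bigoplus_d \S^d(\F_3)$ form the polynomial ring $\bbC[\Delta]$; the delicate remaining case --- ruling out $\F_1$ when $d$ is odd --- is then precisely the observation that $3a + 2b + 3\epsilon = 1$ admits no non-negative integer solutions with $\epsilon \in \{0,1\}$.
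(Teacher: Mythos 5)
Your proof is correct, and it is worth contrasting with the paper's, which is shorter but leans on an external citation at the crucial point. The paper computes $\ell(d,3)$ for $d=0,1,2,3$ directly, exhibits the degree-four invariant of the cubic, propagates the pattern $0,3,2,3$ to all degrees by multiplying highest weight vectors by powers of that invariant inside the branching algebra (Corollary \ref{cor_branch}), and then defers the lower bounds --- that no smaller $\fsl_2$-types ever appear --- to ``the theory of the cubic.'' You instead import the full classical presentation of the covariant ring of the binary cubic (Gordan's theorem): generators $f,H,T,\Delta$ in bidegrees $(1,3),(2,2),(3,3),(4,0)$ with the single syzygy expressing $T^2$, hence the monomial basis $f^aH^bT^\epsilon\Delta^c$ with $\epsilon\in\{0,1\}$. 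This yields every multiplicity $\mult(\F_j:\S^d(\F_3))$ by counting lattice points, and both halves of the lemma drop out at once: the insolubility of $3a+2b+3\epsilon=j$ for the excluded $j$ gives the lower bounds (in particular $\F_1$ never occurs), while nonvanishing of the relevant monomials --- which needs only that the branching algebra is an integral domain, as in Proposition \ref{prop_branch} --- gives existence. The trade-off is clear: you require the full generation statement for the covariant ring rather than merely the existence of the discriminant plus a structural fact about the invariants, but in exchange you obtain exact multiplicities rather than just the lowest type, and the Diophantine bookkeeping is genuinely elementary. One small remark: for the non-existence half you only use that $f,H,T,\Delta$ generate (so the listed monomials span each bigraded piece); the precise coefficients in the syzygy are irrelevant to the argument.
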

\begin{proof}
Row 3 (i.e. forth from the top) consists of the repeating pattern $0,3,2,3$.  This fact follows from the fact that $\S^4[\S^3(\bbC^2)]$ has a non-zero invariant vector (a straightforward calculation).  Multiplying its powers by the highest weight vectors occurring in degree at most three yields the repeated occurrence of $\F^0, \F^3, \F^2, \F^3$.  The fact that no irreps. of lower dimension occur follows from the theory of the cubic (see \cite{VW}, and the references therein).
\end{proof}

We should note that it is well known that the full algebra of invariants on the cubic is generated by the degree four invariant.  Furthermore, the functions on the cubic are a free module over the invariant subalgebra (see \cite{GW}, Chapter 12).  The fact that no lower $\F_d$'s appear other than the repeating patter $0,3,2,3$ is an immediate consequence of the story for the cubic.

\subsection{Quartic forms}
\begin{lem}\label{lem_d4}
$\ell(4,1)=4$ and for all $d\neq 1$ we have $\ell(4,d)=0$.
\end{lem}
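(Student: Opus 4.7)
The plan is to treat the three regimes $d = 0$, $d = 1$, and $d \geq 2$ separately, with the last case reduced by Hermite reciprocity to classical invariants of the binary quartic.

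For $d = 0$ the representation $\S^4[\S^0(\bbC^2)] = \S^4(\bbC)$ is one-dimensional and trivial, so $\ell(4,0) = 0$. For $d = 1$ we have $\S^1(\bbC^2) \cong \F_1$ and $\S^4(\bbC^2) \cong \F_4$ is itself an irreducible $\fsl_2$-representation; the only $\fs$-type appearing is $\F_4$, so $\ell(4,1) = 4$.

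The substantive case is $d \geq 2$, where the goal is to produce an $\fsl_2$-invariant in $\S^4[\S^d(\bbC^2)]$. By Hermite reciprocity,
\[
\S^4[\S^d(\bbC^2)] \cong \S^d[\S^4(\bbC^2)],
\]
so the task becomes: exhibit a non-zero invariant in $\S^d[\S^4(\bbC^2)]$ for every $d\geq 2$. Here I would invoke the classical invariant theory of the binary quartic form (see \cite{GW}, or the references in \cite{VW}), which gives two algebraically independent $\fsl_2$-invariants $I_2 \in \S^2[\S^4(\bbC^2)]$ and $I_3 \in \S^3[\S^4(\bbC^2)]$ (the apolar invariant and the catalecticant/discriminant).

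From these two, the existence of a non-zero invariant in every higher degree can be deduced purely formally via the branching algebra: having found invariants in degrees $2$ and $3$, Corollary \ref{cor_branch} (applied with $\fk = \fsl_n$, $\fs \cong \fsl_2$, and the two increments $2\omega_1$, $3\omega_1$ in the polynomial grading on $\bbC[\S^4(\bbC^2)]$) produces non-zero invariants in every degree of the form $2a + 3b$ with $a, b \in \bbN$. Since $\{2a + 3b : a, b \in \bbN\} = \bbN \setminus \{1\}$, this covers every $d \geq 2$, giving $\ell(4,d) = 0$.

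The only genuine hurdle is the existence of the quartic invariants $I_2$ and $I_3$. This is a classical and well-documented fact, so a citation to \cite{GW} or \cite{VW} suffices; a reader-friendly alternative is to write the quartic as $\sum_{k=0}^{4}\binom{4}{k} a_k x^{4-k} y^k$ and verify directly that $a_0 a_4 - 4 a_1 a_3 + 3 a_2^2$ and the corresponding $3 \times 3$ catalecticant determinant are $\SL_2$-invariant under the substitution $x \mapsto \alpha x + \beta y$, $y \mapsto \gamma x + \delta y$. Once these two invariants are in hand, the remainder of the argument is a one-line application of Corollary \ref{cor_branch}.
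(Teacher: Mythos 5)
Your proof is correct and follows essentially the same route as the paper: both establish $\fsl_2$-invariants for the binary quartic in degrees $2$ and $3$ (the paper by direct computation of the first entries of row $4$, you by citing the classical invariants $I_2$ and $I_3$), and both then use products of these in the branching algebra together with $\{2a+3b : a,b\in\bbN\} = \bbN\setminus\{1\}$ to get an invariant in $\S^d[\S^4(\bbC^2)]\cong\S^4[\S^d(\bbC^2)]$ for every $d\geq 2$. The extra explicitness about Hermite reciprocity and the numerical semigroup is a welcome filling-in of details the paper leaves implicit.
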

\begin{proof}
Row 4 (i.e. fifth from the top) has its first four entries 0,4,0,0.  This means that there is an invariant in degree 2 and 3.  Powers and products of these two invariants imply the existence of invariants in degree $d$ for $d \geq 2$.  Thus the remaining entries of the entire row are all zero.
\end{proof}

Note that by Hermite reciprocity we have that $\S^4(\S^d(\bbC^2))$ has an invariant for all $d \neq 1$.  For each $d \geq 0$ fix a choice of non-zero invariant, $f_4(d)$.  We will use these in the remaining cases.

We should note that the invariant polynomials on the quartic form a polynomial ring generated by two polynomials of degree 2 and 3.  The full polynomial algebra is a free module over the invariants -- a consequence of Kostant-Rallis theory (see \cite{GW} Chapter 12).

\subsection{Quintic forms}
\begin{lem}\label{lem_d5}
For odd $d \geq 5$, $\ell(5,d) = 1$, and $\ell(5,1)=5$, $\ell(5,3)=3$.

For even $d \geq 16$, $\ell(5,d)=0$ and
$\ell(5,0)=\ell(5,4)=\ell(5,8)=\ell(5,12)=0$.

Lastly, $\ell(5,2)=\ell(5,6)=\ell(5,10)=\ell(5,14)=2$.
\end{lem}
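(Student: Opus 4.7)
The plan is to combine Hermite reciprocity with the classical structure of the invariant ring of the binary quintic, and then propagate along the lines of Corollary \ref{cor_branch}: in the present polynomial-ring setting, the content of that corollary is that if $h$ is an $\fs$-highest weight vector of weight $\mu$ in $\S^{d_0}[\S^5(\bbC^2)]$ and $f$ is an $\fs$-invariant of degree $e$, then $hf$ is a nonzero $\fs$-highest weight vector of weight $\mu$ in $\S^{d_0+e}[\S^5(\bbC^2)]$, because $\bbC[\S^5(\bbC^2)]$ is an integral domain.

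The three smallest $d$ are immediate: $\S^5[\S^1(\bbC^2)] \cong \F_5$ gives $\ell(5,1)=5$; $\ell(5,3)=\ell(3,5)=3$ is Lemma \ref{lem_d3}; and the Clebsch--Gordan decomposition
\[
    \S^2[\S^5(\bbC^2)] \;\cong\; \F_{10} \oplus \F_6 \oplus \F_2
\]
gives $\ell(5,2)=\ell(2,5)=2$.

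The key classical input that I would cite (see \cite{GW} and the references in \cite{VW}) is that $\bbC[\S^5(\bbC^2)]^{\SL_2}$ is generated in degrees $4,8,12,18$. A one-line numerical-semigroup check then shows that the set of degrees in which a quintic invariant exists is precisely $\{0,4,8,12\}\cup\{d\text{ even}:d\geq 16\}$. By Hermite reciprocity this is exactly the set of $d$ for which $\ell(5,d)=0$, which handles all the claims of that form (including the $d\geq 16$ statement) and forces $\ell(5,d)\geq 1$ for $d\in\{2,6,10,14\}$. Since $d$ is even in those four cases, every weight of $\S^5[\S^d(\bbC^2)]$ is even, forcing $\ell(5,d)\geq 2$. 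Equality comes from multiplying the $\F_2$-highest weight vector of $\S^2[\S^5(\bbC^2)]$ by powers of the degree-$4$ quintic invariant $f_4$ to deposit a nonzero $\F_2$ highest weight vector in $\S^{2+4k}[\S^5(\bbC^2)]$ for every $k\geq 0$, covering $d=2,6,10,14$.

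For odd $d\geq 5$, Proposition \ref{prop_odd} already gives $\ell(5,d)\geq 1$, and it suffices to exhibit $\F_1$ as a summand in the two base cases $\S^5[\S^5(\bbC^2)]$ and $\S^5[\S^7(\bbC^2)]$; once this is known, multiplication by $f_4$ propagates $\F_1$ into $\S^5[\S^d(\bbC^2)]$ for every $d\in\{5,9,13,\ldots\}\cup\{7,11,15,\ldots\}$, which is every odd $d\geq 5$. I expect the main obstacle to be precisely these two base verifications; both reduce to finite $q$-binomial expansions, namely computing $\binom{10}{5}_q$ and (using Hermite reciprocity) $\binom{12}{5}_q$ and reading off the multiplicity of $\F_1$ as the difference $[q^1]-[q^3]$ of the character. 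Modulo these finite checks and the citation of the classical quintic invariant degrees, the remainder of the argument is bookkeeping with Hermite reciprocity, Proposition \ref{prop_odd}, and the integral-domain propagation of Corollary \ref{cor_branch}.
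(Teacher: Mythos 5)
Your argument is correct, and it reaches the same propagation mechanism the paper uses (multiplication by the degree-four quintic invariant inside the coordinate ring, Hermite reciprocity, and Proposition \ref{prop_odd} for the odd-degree lower bound), but it gets its initial data from a different place. The paper simply computes the nineteen values $\ell(5,0),\ldots,\ell(5,18)$ by machine and then translates the terminal pattern $1,0,1,0$ to the right by powers of $f_4(5)$; in particular the vanishing $\ell(5,d)=0$ for $d\in\{0,4,8,12\}$ and the non-vanishing for $d\in\{2,6,10,14\}$ are just read off the table. You instead import the classical fact that $\bbC[\S^5(\bbC^2)]^{\SL_2}$ is generated in degrees $4,8,12,18$, so that (the invariant ring being a domain) the degrees carrying a nonzero invariant are exactly the numerical semigroup $\langle 4,18\rangle=\{0,4,8,12\}\cup\{\text{even }d\geq 16\}$; this settles every ``$\ell=0$ versus $\ell\geq 1$'' question at once, structurally rather than computationally, and in particular explains \emph{why} the exceptional even values $2,6,10,14$ occur. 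The trade-off is that you lean on a nontrivial classical theorem (Hermite/Gordan; it is not obviously in \cite{GW}, so the citation should be firmed up), whereas the paper is self-contained modulo a finite character computation. Your remaining obligations are genuinely finite and true: $\S^2[\S^5(\bbC^2)]\cong\F_{10}\oplus\F_6\oplus\F_2$ is immediate, and the occurrence of $\F_1$ in $\S^5[\S^5(\bbC^2)]$ and $\S^7[\S^5(\bbC^2)]$ (equivalently, that $[q^1]-[q^3]$ of $\binom{10}{5}_q$ and $\binom{12}{5}_q$ is positive) should be carried out rather than deferred, since these two base cases are exactly what seeds the entire odd-$d$ claim; once they are checked, the $f_4$-translation covers $\{5,9,13,\ldots\}\cup\{7,11,15,\ldots\}$, i.e.\ all odd $d\geq 5$, and the proof is complete.
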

\begin{proof}
Row 5 (sixth from the top) requires examining columns 0 through 18:
\[
    0, 5, 2, 3, 0, 1, 2, 1, 0, 1, 2, 1, 0, 1, 2, 1, 0, 1, 0.
\]
Since we know we have a degree four invariant, $f_4(5)$, for the quintic we can predict that the final $1,0,1,0$ will continue to repeat by multiplying by powers.  Proposition \ref{prop_odd} guarantees that there are no invariants in the odd degree columns.
\end{proof}

\subsection{Sextic forms}
\begin{lem}\label{lem_d6}  For $d \geq 14$ or any even $d$, $\ell(6,d) = 0$.

For $d=3,5,7,9,11,13$ we have $\ell(6,d)=2$.  Lastly, $\ell(6,1)=6$.
\end{lem}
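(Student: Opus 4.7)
The plan is to dispatch the three cases of the statement — $d = 1$, $d$ even, and $d$ odd with $d \geq 3$ — separately, combining earlier lemmas with Corollary~\ref{cor_branch}. The case $d = 1$ is immediate: $\S^6[\F_1] \cong \S^6(\bbC^2) = \F_6$ is already irreducible, so $\ell(6,1) = 6$. For even $d \geq 0$, Lemma~\ref{lem_d2} supplies a nonzero $\fsl_2$-invariant $h \in \S^2[\F_d]$; since $\S^\bullet[\F_d]$ is an integral domain, $h^3$ is a nonzero invariant in $\S^6[\F_d]$, whence $\ell(6,d) = 0$.

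For odd $d \geq 3$ the argument will have two parts. A monomial $\prod_{j=1}^6 x^{i_j} y^{d-i_j}$ in $\S^6[\F_d]$ has weight $2\sum_j i_j - 6d$, which is even, so $\F_1$ (whose weights are $\pm 1$) cannot occur and $\ell(6,d) \neq 1$. Next, Lemma~\ref{lem_d2} gives $\F_2 \subseteq \S^2[\F_d]$ and Lemma~\ref{lem_d4} gives a nonzero invariant in $\S^4[\F_d]$; applying Corollary~\ref{cor_branch} with $\la_1 = 2\omega_1$, $\la_2 = 4\omega_1$, and $\mu$ the highest weight of $\F_2$, I obtain $\F_2 \subseteq \L_\K(6\omega_1) = \S^6[\F_d]$. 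This establishes $\ell(6,d) \leq 2$ for every odd $d \geq 3$.

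What remains is to decide, for each odd $d \geq 3$, whether $\S^6[\F_d]$ carries a nonzero invariant. By Hermite reciprocity this is the question of when the graded invariant algebra $\bigoplus_d (\S^d[\F_6])^{\SL_2}$ of the binary sextic has a nonzero degree-$d$ piece. I would invoke the classical description of this ring as being generated, as a graded $\bbC$-algebra, by invariants of degrees $2, 4, 6, 10, 15$; since $15$ is the only odd generator, a nonzero invariant of odd degree $d$ exists iff $d \geq 15$. Putting this together: for odd $d \in \{3,5,7,9,11,13\}$ no invariant exists, so with $\ell(6,d) \neq 1$ and $\ell(6,d) \leq 2$ one gets $\ell(6,d) = 2$; for odd $d \geq 15$ the product $I_{15}\, I_2^{(d-15)/2}$ is a nonzero invariant, so $\ell(6,d) = 0$.

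The main obstacle is this last step. A route staying purely within the paper's machinery would verify $\ell(15,6)=0$ by a direct character computation — this is the entry of the displayed matrix that initiates the chain — and then propagate to every odd $m \geq 15$ using Corollary~\ref{cor_branch} with the degree-$2$ invariant of $\S^\bullet[\F_6]$; the other initial values $m \in \{3,5,7,9,11,13\}$ would likewise be read from the matrix. Either way, the substantive input is the classical theory of invariants of the binary sextic, which is what really distinguishes $d < 15$ from $d \geq 15$ in the odd case.
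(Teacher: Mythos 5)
Your proof is correct, and the lemma does follow from it, but the decisive step runs on a different engine than the paper's. The paper's proof is computational: it expands the character $\binom{6+d}{6}_q$ for $d\le 17$ (the displayed matrix), reads off row $6$ as $0,6,0,2,0,2,\dots,2,0,0,0,0$, and then propagates the four consecutive zeros in columns $14$--$17$ to all larger $d$ by multiplying by the degree-four invariant $f_4(6)$. Your steps (i)--(iii) --- cubing the quadratic invariant of Lemma~\ref{lem_d2} for even $d$, excluding $\F_1$ by weight parity, and deriving $\ell(6,d)\le 2$ for odd $d\ge 3$ from Lemmas~\ref{lem_d2} and~\ref{lem_d4} via Corollary~\ref{cor_branch} --- are clean, stay entirely within the paper's toolkit, and are more conceptual than reading entries off a table. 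The departure is your step deciding which odd degrees carry an invariant: you import the classical fact that the invariant ring of the binary sextic is generated in degrees $2,4,6,10,15$. That fact is true (the skew invariant $I_{15}$ is the lowest odd-degree invariant, and $I_{15}I_2^{k}\ne 0$ because the invariant ring is a domain), and it cleanly explains \emph{why} the cutoff sits at $15$, but it is an external input of comparable depth to the lemma itself, which the paper deliberately avoids in favor of a finite character computation. Your proposed fallback --- verify $\ell(15,6)=0$ directly, propagate along the odd degrees with the quadratic invariant of the sextic via Corollary~\ref{cor_branch}, and read the values for odd $d\le 13$ from the computed table --- is essentially the paper's own method and closes the argument without the classical citation. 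In short: correct either way; the classical route buys conceptual transparency at the cost of a nontrivial citation, while the paper's route buys self-containedness at the cost of a machine computation.
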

\begin{proof}
Row 6 (seventh from the top) requires examining columns 0 through 17:
\[
    0, 6, 0, 2, 0, 2, 0, 2, 0, 2, 0, 2, 0, 2, 0, 0, 0, 0.
\]
Since the last four entries are 0, we can infer that the remaining entries are 0 by multiplying by $f_4(6)$.
\end{proof}

\subsection{Septic forms}
\begin{lem}\label{lem_d7}  For odd $d \geq 5$, $\ell(7,d)=1$.  $\ell(7,1)=7.$
$\ell(7,3)=3$.  $\ell(7,2)=\ell(7,6)=\ell(7,10)=2$.
For even $d \neq 2,6,10$, $\ell(7,d)=0$.
\end{lem}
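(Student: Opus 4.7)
My plan is to follow the template of Lemmas \ref{lem_d5} and \ref{lem_d6}: first verify the initial segment of row $7$ in the displayed matrix (columns $0$ through $15$ suffice) by direct expansion of the character $\binom{7+d}{7}_q$ into a sum of $\chi_j$'s, and then extend the pattern to all larger $d$ using Hermite reciprocity together with Corollary \ref{cor_branch}. The propagation rests on Lemma \ref{lem_d4}, which guarantees an $\fs$-invariant $f_4(7) \in \S^4[\S^7(\bbC^2)]$; multiplication by $f_4(7)$ implements the step $m \mapsto m+4$ along column $d = 7$ in the branching algebra (equivalently, $\la_1 \mapsto \la_1 + \la_2$ with $\la_2 = 4\omega_1$ in $\fk = \fsl_8$).

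For the odd case, the initial segment shows that $\F_1$ occurs in $\S^m[\S^7(\bbC^2)] \cong \L_{\fsl_8}(m\omega_1)$ for $m = 5$ and $m = 7$, so Corollary \ref{cor_branch} propagates $\F_1$ to $\L_{\fsl_8}(m\omega_1)$ for every $m \in \{5, 9, 13, \ldots\} \cup \{7, 11, 15, \ldots\}$, i.e., all odd $m \geq 5$. Proposition \ref{prop_odd} rules out $\F_0$ whenever $m$ and $7$ are both odd, so $\ell(m, 7) = 1$ for all odd $m \geq 5$; Hermite reciprocity then gives $\ell(7, d) = 1$ for all odd $d \geq 5$. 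The even case is handled identically: the initial segment provides $\fs$-invariants at $m = 0, 4, 8, 12, 14$, and the same corollary extends these to all $m \in \{0\} \cup \{4, 8, 12, 16, 20, \ldots\} \cup \{14, 18, 22, \ldots\}$, which is precisely the set of even $m \notin \{2, 6, 10\}$. The remaining small-case values $\ell(7,1)=7$, $\ell(7,3)=3$, and $\ell(7,2)=\ell(7,6)=\ell(7,10)=2$ are read off from the initial segment.

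The main obstacle is verifying the initial segment, especially the entry $\ell(7,14)=0$, which breaks the strict period-$4$ pattern seen in rows $5$ and $6$ and is what furnishes the starting point for the $m \equiv 2 \pmod 4$ branch of the even propagation. This entry can be obtained by expanding $\binom{21}{7}_q$ symbolically (as in the machine computation behind the displayed table), or classically from the existence of a degree-$14$ generator in Sylvester's basic system of invariants for the binary septic. A secondary subtlety is confirming that $\ell(7,d)=2$ rather than $0$ at $d = 2, 6, 10$: since $\S^d(\bbC^2)$ has only even weights when $d$ is even, only $\F_j$ with even $j$ can appear in $\S^7[\S^d(\bbC^2)]$, so it suffices to rule out $\F_0$ at those three columns, a bounded character computation.
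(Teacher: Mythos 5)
Your proposal is correct and follows essentially the same route as the paper: compute the initial segment of row $7$ by expanding the $q$-binomial character, then propagate with the degree-four invariant $f_4(7)$ via Hermite reciprocity and Corollary \ref{cor_branch}, using Proposition \ref{prop_odd} to pin the odd columns at $1$. You are in fact somewhat more explicit than the paper about the key seed $\ell(7,14)=0$ (which breaks the period-$4$ pattern and starts the $14+4\bbN$ branch) and about why the even columns $d=2,6,10$ give $2$ rather than $0$, but this is elaboration of the same argument, not a different one.
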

\begin{proof}
Row 7 (eighth from the top) requires examining columns 0 through 14:
\[
0, 7, 2, 3, 0, 1, 2, 1, 0, 1, 2, 1, 0, 1, 0.
\]
Since the last four entries are $1,0,1,0$ we can expect this pattern to repeat by multiplying by $f_4(7)$ and observing that there are no invariants in odd degree (again my Proposition \ref{prop_odd}).
\end{proof}

\subsection{Forms of arbitrary degree.}

A final observation is that most of the interior of the table is filled with zeros and ones, which we prove now.   With this in mind define:

\[
E = \left\{ (5,6),(5,10),(5,14),(6,7),(6,9),(6,11),(6,13),(7,10) \right\}.
\]

We have
\begin{prop}  The values of $\ell(d,m)$ are as follows.
\begin{itemize}
\item For all $m \in \bbN$, $\ell(m,0)=0$.
\item For all $m \in \bbN$, $\ell(m,1)=m$.
\item For all $m \in \bbN$,
    \[  \ell(m,2) =
        \left\{
        \begin{array}{ll}
            0, & \hbox{$m$ even;} \\
            2, & \hbox{$m$ odd.}
        \end{array}
        \right.
    \]
\item For all $m \in \bbN$,
    \[  \ell(m,3)=
        \left\{
        \begin{array}{ll}
            0, & \hbox{$m \equiv 0 \mod 4$;} \\
            2, & \hbox{$m \equiv 2 \mod 4$;} \\
            3, & \hbox{$m \equiv \pm 1 \mod 4$.} \\
        \end{array}
        \right.
    \]
\item For all $m \in \bbN$, if $m \neq 1$ then $\ell(4,m)=0$; $\ell(4,1)=4$.
\item For all $d \geq m \geq 5$
\[ \ell(m,d) =
\left\{
  \begin{array}{ll}
    2, & \hbox{$(m,d) \in    E$;} \\
    1, & \hbox{$(m,d) \notin E$ and $m \equiv d \equiv 1 \mod 2$;} \\
    0, & \hbox{otherwise.}
  \end{array}
\right.
\]
\end{itemize}
\end{prop}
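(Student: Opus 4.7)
\emph{Plan of proof.} The first five bullets are immediate: the cases $(m,0)$ and $(m,1)$ follow from the definitions (since $\S^m[\F_0]\cong\bbC$ and $\S^1[\F_d]\cong\F_d$), while $(m,2)$ and $(m,3)$ are Lemmas \ref{lem_d2} and \ref{lem_d3} (applied with arguments swapped via Hermite reciprocity), and $(4,m)$ is Lemma \ref{lem_d4}. For the final bullet, Hermite reciprocity reduces us to $d\geq m\geq 5$; the subcases $m\in\{5,6,7\}$ are Lemmas \ref{lem_d5}, \ref{lem_d6}, and \ref{lem_d7} (from which the set $E$ emerges intact). The substantive content is the case $m\geq 8$, $d\geq m$.

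The engine is Corollary \ref{cor_branch} applied to the degree-four invariant $f_4(d)\in\S^4[\F_d]^{\fs}$ (which exists for $d\neq 1$ by Lemma \ref{lem_d4}), yielding $\ell(m,d)\leq \ell(m-4,d)$ whenever $m\geq 4$ and $d\neq 1$. Iterating four at a time, each $m\geq 8$ reduces to a representative $m_0\in\{4,5,6,7\}$ chosen by $m\bmod 4$, so
\[
    \ell(m,d)\;\leq\;\ell(m_0,d),
\]
and substituting the values of $\ell(m_0,d)$ from Lemmas \ref{lem_d4}--\ref{lem_d7} matches the proposition's claim in almost every case. When $m$ and $d$ are both odd, the matching lower bound $\ell(m,d)\geq 1$ is forced by Proposition \ref{prop_odd}; otherwise the lower bound is vacuous.

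A direct enumeration of when $\ell(m_0,d)$ is strictly positive while the proposition nevertheless asserts $\ell(m,d)=0$ yields a short residual list of pairs with $d\geq m\geq 8$, namely
\[
    \{(9,10),\,(9,14),\,(10,11),\,(10,13),\,(13,14)\}.
\]
All five lie within the tabulated range $m,d\leq 15$ displayed in Section \ref{sec_BinaryForms}, where the table directly records $\ell=0$ in each of them. For $d>15$, one propagates in the $d$-direction via Hermite reciprocity together with the same degree-four invariant (now $f_4(m)\in\S^4[\F_m]^{\fs}$, available because $m\geq 8\neq 1$), which gives $\ell(m,d+4)\leq\ell(m,d)$ and thereby extends the tabulated zeros (and the ones in the odd-odd case) to all larger $d$ in each residue class modulo $4$. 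The main obstacle is this residual bookkeeping: one must confirm that the list above is exhaustive and that no hidden exception has been overlooked, which is a careful finite check rather than a conceptual difficulty.
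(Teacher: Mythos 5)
Your argument is correct and follows essentially the same route as the paper: reduce modulo $4$ using the degree-four invariants $f_4$ together with Hermite reciprocity and Corollary \ref{cor_branch}, use Proposition \ref{prop_odd} for the lower bound in the odd-odd case, and anchor everything in a finite block of tabulated values. The only cosmetic difference is organizational -- the paper bases the $m,d\geq 8$ region on the $4\times 4$ block in rows and columns $8$ through $11$, whereas you reduce to rows $4$ through $7$ and patch the five exceptional pairs directly from the table; your exception list is indeed exhaustive.
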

\begin{proof}  For $d,m=0,1$ the result is trivial.
For $d,m=2,3,4,5,6,7$ the result follows from Lemmas
\ref{lem_d2}, \ref{lem_d3}, \ref{lem_d4}, \ref{lem_d5}, \ref{lem_d6}, \ref{lem_d7} and Hermite reciprocity.
If $d,m \geq 8$ then we revisit the table of values for $\ell$:
\[
\begin{array}{c||cccccccc|cccc|ccccc} \ell
  & 0& 1& 2& 3& 4& 5& 6& 7& 8& 9&10&11&12&13&14&15& \cdots \\ \hline \hline
 0& 0& 0& 0& 0& 0& 0& 0& 0& 0& 0& 0& 0& 0& 0& 0& 0& \cdots \\
 1& 0& 1& 2& 3& 4& 5& 6& 7& 8& 9&10&11&12&13&14&15& \cdots \\
 2& 0& 2& 0&2& 0&2& 0&2& 0&2& 0&2&0&2&0&2& \cdots \\
 3& 0& 3&2&3&0&3&2&3&0&3&2&3&0&3&2&3& \cdots \\
 4& 0& 4&0&0&0&0&0&0&0&0&0&0&0&0&0&0& \cdots \\
 5& 0& 5&2&3&0&1&2&1&0&1&2&1&0&1&2&1& \cdots \\
 6& 0& 6&0&2&0&2&0&2&0&2&0&2&0&2&0&0& \cdots \\
 7& 0& 7&2&3&0&1&2&1&0&1&2&1&0&1&0&1& \cdots \\ \hline
 8& 0& 8&0&0&0&0&0&0&0&0&0&0&0&0&0&0& \cdots \\
 9& 0& 9&2&3&0&1&2&1&0&1&0&1&0&1&0&1& \cdots \\
10& 0&10&0&2&0&2&0&2&0&0&0&0&0&0&0&0& \cdots \\
11& 0&11&2&3&0&1&2&1&0&1&0&1&0&1&0&1& \cdots \\ \hline
12& 0&12&0&0&0&0&0&0&0&0&0&0&0&0&0&0& \cdots \\
13& 0&13&2&3&0&1&2&1&0&1&0&1&0&1&0&1& \cdots \\
14& 0&14&0&2&0&2&0&0&0&0&0&0&0&0&0&0& \cdots \\
15& 0&15&2&3&0&1&0&1&0&1&0&1&0&1&0&1& \cdots \\
\vdots & \vdots & \vdots & \vdots & \vdots & \vdots &
\vdots & \vdots & \vdots & \vdots & \vdots & \vdots &
\vdots & \vdots & \vdots & \vdots & \vdots & \ddots
\end{array}
\]
and observe the, eventual, periodicity modulo 4 along all rows and columns.

Specifically, rows and columns 8 through 11 form a $4 \times 4$ matrix:
\[
\left[ \begin {array}{cccc} 0&0&0&0\\\noalign{\medskip}0&1&0&1\\\noalign{\medskip}0&0&0&0\\\noalign{\medskip}0&1&0&1\end {array}
 \right]
\]
The highest weight vectors corresponding to the weights in the above matrix may be multiplied by the degree four invariants $f_4(8),f_4(9),f_4(10),f_4(11)$ to translate the $4 \times 4$-submatrix to the right.  Then, by Hermite reciprocity, these four rows may be translated down by multiplying by $f_4(m)$ for $m \geq 8$.
\end{proof}

From the point of view of proving our main theorem we need
\begin{cor}\label{cor_BinaryForms}
    For all $m\geq 0$ and $d \geq 2$, $\ell(m,d) \leq d$.
\end{cor}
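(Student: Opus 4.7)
The plan is to derive the corollary directly from the preceding proposition by an exhaustive case check, invoking Hermite reciprocity whenever needed to swap the roles of $m$ and $d$. Since the hypothesis $d \geq 2$ excludes the only genuinely unbounded value (namely $\ell(m,1) = m$), the inequality $\ell(m,d) \leq d$ should be immediate in each case.

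First I would dispose of the small columns $d = 2, 3, 4$ using the relevant bullets of the proposition. For $d = 2$ the proposition yields $\ell(m,2) \in \{0, 2\}$; for $d = 3$ it yields $\ell(m,3) \in \{0, 2, 3\}$; and for $d = 4$ Hermite reciprocity identifies $\ell(m,4)$ with $\ell(4,m)$, whose values are $0$ for $m \neq 1$ and $4$ when $m = 1$. In each instance the bound $\ell(m,d) \leq d$ is visible by inspection.

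For the remaining region $d \geq 5$, I would first apply Hermite reciprocity if necessary so as to arrange $d \geq m$. If also $m \geq 5$, then the last case of the proposition says $\ell(m,d) \in \{0, 1, 2\}$, which is at most $d$ since $d \geq 5$. If instead $m \leq 4$, one further application of Hermite reciprocity swaps the arguments back into one of the rows $d \in \{2,3,4\}$ already handled, and the bound is inherited.

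The only obstacle, such as it is, is purely bookkeeping: one must apply Hermite reciprocity in just the right order so that every pair $(m,d)$ with $d \geq 2$ lands in one of the listed cases of the proposition, and so that the sole pair at which $\ell$ meets its column index, namely $\ell(4,1) = 4 = \ell(1,4)$, is correctly read as $4 \leq 4$ rather than mistaken for a violation of the bound. Once the cases are organized this way, the corollary is a one-line verification.
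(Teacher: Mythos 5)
Your proposal is correct and is essentially the paper's own (implicit) argument: the corollary is simply read off from the complete list of values of $\ell(m,d)$ in the preceding proposition, using Hermite reciprocity to route each pair into one of the listed cases. Two trivial bookkeeping patches: in your $d\ge 5$, $m\le 4$ branch the subcases $m=0$ and $m=1$ should be sent to the first two bullets of the proposition rather than to the rows $2$--$4$ (they give $\ell(0,d)=0$ and $\ell(1,d)=d\le d$), and equality $\ell(m,d)=d$ actually occurs along the entire row $m=1$ (and at various points of the columns $d=2,3$), not only at $(1,4)$ --- neither observation affects the bound.
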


That is to say, for any $m$ and $d \geq 1$, there exists an irreducible representation of $\fsl_2$ with dimension at most $d+1$ occurring in $\S^m[\S^d(\bbC^2)]$.  This completes our analysis of the lowest $\fsl_2$-types occurring in binary forms.

\section{The proof of the general case}\label{sec_GeneralCase}

A principal $\fsl_2$ subalgebra of $\fsl_n$ is given by the image of $\fsl_2$ in the $n$-dimensional irreducible representation of $\fsl_2$ (over $\bbC$).  For $n \geq 4$ these are not maximal.  Rather, for even $n$ the image is contained in the standard symplectic subalgebra of $\fsl_n$, while for odd $n$ the image is contained in the standard orthogonal subalgebra of $\fsl_n$.  Because of this fact, it is natural to approach the general situation the two cases (Sections \ref{subsec_even} and \ref{subsec_odd}) corresponding to the parity of $n$.  The argument is a fairly straightforward application of the Cartan-Helgason theorem (Section \ref{subsec_CH}) and the Pieri rules (Section \ref{subsec_Pieri}) to reduce the situation to binary forms.  The latter have already been analyzed in Section \ref{sec_BinaryForms}.

\subsection{The Cartan--Helgason theorem}\label{subsec_CH}

Let (K,M) denote a symmetric pair.  That is, M is the fixed point set of an algebraic group involution of K.  Then the pair is spherical, which means here that the affine variety $\K/\M$ has a multiplicity free coordinate ring.  Put another way, for any irreducible representation, $V$, of K, the dimension of the M-invariant subspace, $V^\text{M}$, is at most one dimensional.   These results are part of the Cartan--Helgason theorem (see \cite{GW}).

For our purposes, we consider two examples when $\K = \SL_n$.  First $\M = \SO_n$.  Although the pair is symmetric for all $n$,  we will only consider this example for $n$ odd.  For the second case, we will let $\M=\Sp_n$, for $n$ even.  We have
\begin{prop}\label{prop_CH_SO}
The pair ($\SL_n, \SO_n$) is symmetric and for $\la \in P_+(\fsl_n)$ we have

\[ \dim L(\la)^{\SO_n} = 1 \]
if and only if
\[
    \la \in 2\bbN \omega_1 \oplus 2\bbN \omega_2 \oplus 2\bbN \omega_3 \oplus \cdots \oplus 2\bbN \omega_{n-1}.
\]
That is, the Young diagram corresponding to $\la$ has rows with an even number of boxes.
\end{prop}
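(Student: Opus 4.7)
The plan is to invoke the Cartan--Helgason theorem and then translate its output into the stated partition condition.

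First I would verify that $(\SL_n, \SO_n)$ is a symmetric pair by exhibiting the algebraic group involution $\theta\colon g \mapsto (g^{-1})^T$; its fixed-point subgroup in $\SL_n$ is exactly $\SO_n$. Sphericity then follows from Cartan--Helgason, giving $\dim L(\la)^{\SO_n} \leq 1$ for every $\la \in P_+(\fsl_n)$, so the remaining task is to decide when this dimension is exactly one.

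Cartan--Helgason characterizes the spherical weights by a lattice condition keyed to the restricted root system. The Cartan subalgebra $\fh$ of diagonal trace-zero matrices lies inside the $(-1)$-eigenspace of $d\theta$ on $\fsl_n$ (the symmetric trace-zero matrices), and is in fact maximal abelian there; consequently the pair is split, and its restricted roots coincide with the usual roots of $\fsl_n$. The spherical condition then reduces to $\langle \la, \alpha_i^\vee \rangle \in 2\bbN$ for every simple coroot $\alpha_i^\vee$. Writing $\la = \sum_{i=1}^{n-1} a_i \omega_i$, one has $\langle \la, \alpha_i^\vee \rangle = a_i$, so the condition is precisely $\la \in \bigoplus_{i=1}^{n-1} 2\bbN\,\omega_i$.

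Finally I would reconcile the fundamental-weight description with the Young-diagram one. The paper's convention $\L(\omega_k) \cong \wedge^k \bbC^n$ means $\omega_k$ corresponds to a single column of height $k$, so the diagram of $\la = \sum a_i \omega_i$ has $a_k$ columns of height $k$, and hence row lengths $\sum_{j \geq i} a_j$ for $i = 1, \ldots, n-1$. Consecutive row lengths differ by $a_i$, so every row length is even if and only if every $a_i$ is even, completing the equivalence.

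The main obstacle is confirming the precise form of the Cartan--Helgason condition used, since the statement floats between ``all simple-coroot pairings even'' and various restricted-weight-lattice reformulations. A useful cross-check is the geometric realization $\SL_n/\SO_n \hookrightarrow \text{Sym}_n$ via $g \mapsto g g^T$: the standard $\GL_n$-decomposition $\bbC[\text{Sym}_n] \cong \bigoplus_\mu L(2\mu)$, read through restriction to $\SL_n$, recovers the same ``even rows'' list of spherical highest weights and so corroborates the conclusion.
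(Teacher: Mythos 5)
Your argument is correct and is exactly the route the paper takes: the paper's entire proof is a citation to Goodman--Wallach, Chapter 12, i.e.\ the Cartan--Helgason theorem for the split symmetric pair $(\SL_n,\SO_n)$, which is precisely what you unpack (involution $g\mapsto (g^{-1})^T$, split Cartan in the $(-1)$-eigenspace, evenness of $\langle\la,\alpha_i^\vee\rangle$, translation to even rows). The only point worth a half-sentence in a write-up is that Cartan--Helgason imposes the parity condition on all restricted coroots, which in type $A$ reduces to the simple ones since every coroot is a sum of consecutive simple coroots.
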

\begin{proof} See \cite{GW} Chapter 12.
\end{proof}

\begin{prop}\label{prop_CH_Sp}
The pair ($\SL_n, \Sp_n$) is symmetric and for $\la \in P_+(\fsl_n)$ we have
\[ \dim \L(\la)^{\Sp_n} = 1 \]
if and only if
\[
    \la \in \bbN \omega_2 \oplus \bbN \omega_4 \oplus \bbN \omega_6 \oplus \cdots \oplus \bbN \omega_{n-2}.
\]
That is, the Young diagram corresponding to $\la$ has columns with an even number of boxes.
\end{prop}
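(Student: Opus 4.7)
The plan is to deduce the statement from the general Cartan--Helgason theorem, whose hypotheses are verified exactly as for Proposition~\ref{prop_CH_SO}: the subgroup $\Sp_n$ arises as the fixed-point set of the involution $\theta(g) = J(g^{-1})^{T} J^{-1}$ of $\SL_n$, where $J$ is the defining nondegenerate skew form on $\bbC^n$. In particular $(\SL_n, \Sp_n)$ is a symmetric pair, so the theorem already yields $\dim \L(\la)^{\Sp_n} \le 1$ for every $\la \in P_+(\fsl_n)$, and only the characterization of when this dimension equals $1$ remains.

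For the sufficiency direction I would first exhibit a nonzero $\Sp_n$-invariant in each fundamental representation $\L(\omega_{2k}) \cong \wedge^{2k}(\bbC^n)$ for $1 \le k \le n/2 - 1$. The symplectic form is an $\Sp_n$-fixed element $\omega \in \wedge^{2}(\bbC^n)^{*}$, so its $k$-th exterior power $\omega^{\wedge k} \in \wedge^{2k}(\bbC^n)^{*}$ is $\Sp_n$-fixed and nonzero; transporting it through the $\Sp_n$-equivariant isomorphism $\wedge^{2k}(\bbC^n)^{*} \cong \wedge^{2k}(\bbC^n)$ induced by $\omega$ itself yields the desired invariant. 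The branching-algebra argument behind Proposition~\ref{prop_branch} applies verbatim with $\fs$ replaced by $\fsp_n$ and shows that $\{\la : \L(\la)^{\Sp_n} \ne 0\}$ is closed under addition in $P_+(\fsl_n)$; it therefore contains every $\bbN$-combination $m_2 \omega_2 + m_4 \omega_4 + \cdots + m_{n-2} \omega_{n-2}$.

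For the necessity direction I would identify $\SL_n/\Sp_n$ with the Pfaffian hypersurface $\{\eta \in \wedge^{2}(\bbC^n)^{*} : \mathrm{Pf}(\eta) = 1\}$, on which $\SL_n$ acts via $g \cdot \eta = g\,\eta\,g^{T}$. Since $\mathrm{Pf}$ is $\SL_n$-invariant, $\bbC[\SL_n/\Sp_n]$ is a quotient of the symmetric algebra $\S(\wedge^{2}(\bbC^n))$ by the principal ideal $(\mathrm{Pf} - 1)$. The Peter--Weyl decomposition of $\bbC[\SL_n/\Sp_n]$ as an $\SL_n$-module then forces $\L(\la)^{\Sp_n} \ne 0$ only when $\L(\la)$ occurs as an $\SL_n$-summand of $\S(\wedge^{2}(\bbC^n))$, and the classical Littlewood identity singles these out as exactly those $\L(\la)$ whose highest weight $\la$ has every column of even length.

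The step I expect to be the main obstacle, and for which I would cite Goodman--Wallach rather than reprove from scratch, is the decomposition $\S(\wedge^{2}(\bbC^n)) \cong \bigoplus_{\la} \L(\la)$ over column-even partitions $\la$. A direct argument goes through $(\GL_n, \Sp_n)$-seesaw duality with the oscillator representation; an alternative, which avoids the Pfaffian model entirely, is to appeal to Helgason's criterion after computing the restricted root system of $(\fsl_n, \fsp_n)$ (of type $A_{n/2 - 1}$), where the combinatorial obstacle simply reappears in more standard form.
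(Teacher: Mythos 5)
Your proposal is correct, and it is essentially the argument the paper delegates to its reference: the paper's entire proof of this proposition is the citation ``See \cite{GW} Chapter 12,'' and what you write out (the symmetric-pair involution, the multiplicity-one bound from Cartan--Helgason, the powers of the symplectic form plus the semigroup property of Proposition~\ref{prop_branch} for sufficiency, and the Pfaffian-hypersurface model of $\SL_n/\Sp_n$ together with the Littlewood decomposition of $\S(\wedge^2\bbC^n)$ for necessity) is the standard proof found there. The only blemish is notational --- if $\eta$ lives in $\wedge^2(\bbC^n)^*$ the action should be $\eta \mapsto (g^{-1})^T\eta\, g^{-1}$ rather than $g\eta g^T$ --- which does not affect the argument.
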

\begin{proof} See \cite{GW} Chapter 12.
\end{proof}

\subsection{Pieri rules}\label{subsec_Pieri}

Decomposing tensor products of finite dimensional $\GL_n$-irreps. is a well studied area of combinatorial representation theory.  In the special case where one of the tensor factors is a symmetric or exterior power of the defining representation, the results are particularly nice and will be used here.  They are often referred to as \emph{Pieri rules} in the literature.

We choose the maximal torus for $\GL_n$ to be the group of diagonal matrices with non-zero entries, and fix the Borel to be upper triangular matrices.  On the corresponding Cartan subalgebra, we let $\epsilon_i$ denote the linear functional
\[ \epsilon_i:
\left[
  \begin{array}{ccc}
    d_1 &        &     \\
        & \ddots &     \\
        &        & d_n
  \end{array}
\right] \mapsto d_i.
\]  We will call these the standard coordinates.

Then, for a weakly decreasing $n$-tuple of non-negative integers
($\la_1 \geq \cdots \geq \la_n$), we let
\[
    \la := \la_1 \epsilon_1 + \cdots + \la_n \epsilon_n \in P_+(\fgl_n)
\]
denote the corresponding dominant integral highest weight representation.  Note that $\L(\la)$ has polynomial matrix coefficients.  Each $\L(\la)$ restricts irreducibly to $\SL_n$.  Furthermore, each irrep. of $\SL_n$ is the restriction of such an $\L(\la)$.

The $n$ tuple for $\la$, in the standard coordinates, corresponds to a Young diagram whose $j$-th row has $\la_j$ boxes.  We depict the rows of Young diagrams from top to bottom.
So for example, when $n=8$, $(7,5,5,2,0,0,0,0)$ corresponds to
\begin{equation}\label{eq_la}
\begin{Young}
&&&&&&\cr
&&&&\cr
&&&&\cr
&\cr
\end{Young}
\end{equation}
Denote the weight of the above diagram by $\la$.

Recall that the decomposition of $\L(\mu) \otimes \S^d(\bbC^n)$ into irreducibles is multiplicity free with the $\la$ occurring corresponding to Young diagrams obtained by adding $d$ boxes to the Young diagram of $\mu$ with no two in the same column.  For example, when $d=5$ the diagram in (\ref{eq_la}) is obtained from ($5,5,2,2,0,0,0,0$) by adding five boxes:
\begin{equation}\label{eq_mu}
\begin{Young}
&&&&&$X$&$X$\cr
&&&&\cr
&&$X$&$X$&$X$\cr
&\cr
\end{Young}.
\end{equation}
Thus, if we denote the weight of the diagram without the X's by $\mu$ then we know that $\L(\la)$ is one of the irreducible constituents of $\L(\mu) \otimes \S^5(\bbC^8)$, as an $\SL_8$ representation.

Recall that the symmetric powers of the defining representation of symplectic group are irreducible.  Furthermore, it is worth noting that it is always possible to add or remove a ``horizontal strip'' in this way so that the resulting diagram has even columns.
A consequence of this observation is
\begin{prop}\label{prop_sp}
Let $n \geq 2$ be even.  For any irreducible representation, $\L(\mu)$, of $\SL_n$ there exists $d$ such that
\[
    \mult(\S^d(\bbC^n): \Res^{\SL_n}_{\Sp_n} \L(\mu)) = 1.
\]
\end{prop}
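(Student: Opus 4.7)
The plan is to convert the multiplicity into a dimension of $\Sp_n$-invariants, apply the Pieri rule, and then choose $d$ so that exactly one summand of the Pieri decomposition is $\Sp_n$-spherical. First, every finite-dimensional irreducible $\Sp_n$-representation is self-dual (for instance because the symplectic form identifies $\bbC^n$ with $(\bbC^n)^*$ as $\Sp_n$-modules, hence $\S^d(\bbC^n) \cong \S^d(\bbC^n)^*$). Frobenius reciprocity then gives
\[
\mult_{\Sp_n}\!\bigl(\S^d(\bbC^n) : \Res^{\SL_n}_{\Sp_n} \L(\mu)\bigr)
\;=\; \dim\bigl(\S^d(\bbC^n) \otimes \L(\mu)\bigr)^{\Sp_n}.
\]

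Next, expand the tensor product by the Pieri rule, obtaining a multiplicity-free decomposition $\S^d(\bbC^n) \otimes \L(\mu) \cong \bigoplus_\la \L(\la)$ indexed by those $\la$ for which $\la/\mu$ is a horizontal strip of $d$ boxes. By Proposition \ref{prop_CH_Sp}, each such $\L(\la)$ contributes either $1$ or $0$ to the right-hand side, according as every column of $\la$ has even length or not. So the task reduces to producing a $d$ for which precisely one $\la$ in the Pieri decomposition has all-even columns.

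I would take $d$ to be the number of columns of $\mu$ of odd length, and take $\la$ to be the shape obtained from $\mu$ by lengthening each odd column by exactly one box. By construction $\la$ has all columns of even length, and $\la/\mu$ is automatically a horizontal strip of the right size. Conversely, if $\la'/\mu$ is any horizontal strip, then the column lengths satisfy $(\la')^t_i - \mu^t_i \in \{0,1\}$; demanding $(\la')^t_i$ to be even forces $(\la')^t_i - \mu^t_i$ to equal the parity of $\mu^t_i$, and therefore $\la' = \la$. This shows both existence and uniqueness.

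The only routine verification is that the constructed $\la$ really is a Young diagram, i.e.\ its column lengths are weakly decreasing. This comes down to a short case analysis on the parities of $\mu^t_i$ and $\mu^t_{i+1}$, the key observation being that whenever those parities differ the inequality $\mu^t_i \geq \mu^t_{i+1}$ already implies $\mu^t_i \geq \mu^t_{i+1} + 1$, so the $\pm 1$ adjustment cannot destroy the monotonicity. Since $n$ is even the bound $\mu^t_i \leq n$ is also preserved. I expect this parity bookkeeping to be the only step requiring any attention; the rest is a direct appeal to Pieri and Cartan--Helgason.
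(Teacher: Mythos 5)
Your proof is correct and follows essentially the same route as the paper: reduce the multiplicity to a space of $\Sp_n$-invariants, expand by the Pieri rule, and invoke the Cartan--Helgason description of the $\Sp_n$-spherical weights (Proposition \ref{prop_CH_Sp}). The only differences are cosmetic improvements: you dualize $\S^d(\bbC^n)$ via its $\Sp_n$-self-duality instead of replacing $\mu$ by its dual diagram, and you explicitly verify that \emph{exactly one} constituent of the Pieri decomposition is spherical (the parity argument on column lengths), a uniqueness point the paper's proof leaves implicit but which is needed for the stated equality with $1$.
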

\begin{proof}  For an irreducible $\SL_n$-representation $\L(\mu)$ we have
\[
    \Hom_{\Sp_n}( \L(\mu), \S^d(\bbC^n) ) \cong
    \left[ \S^d(\bbC^n) \otimes \L(\mu)^* \right]^{\Sp_n}.
\]  Although finite dimensional $\Sp_n$-irreps. are self dual, this is not true for $\SL_n$-irreps.  However, without loss of generality we can replace the diagram for $\mu$ by the diagram for the dual.  (The Young diagram corresponding to $\L(\mu)^*$ is the complement of the diagram for $\mu$ inside the $\mu_1 \times n$-rectangle, but we do not need this fact.)

Using the Pieri rules we can explicitly decompose the tensor product.  As previously noted, by choosing $d$ appropriately, we can add enough boxes to the Young diagram corresponding to $\mu$ so that the resulting diagram has even columns.  The result follows from Proposition \ref{prop_CH_Sp}.
\end{proof}

\bigskip

Next, recall that the decomposition of, $\L(\nu) \otimes \wedge^d(\bbC^n)$ into irreducibles is multiplicity free with the $\la$ occurring if the corresponding Young diagram is obtained by adding $d$ boxes to the Young diagram of $\nu$ with no two in the same row.  For example, when $d=3$ the diagram in (\ref{eq_la}) is obtained from (6,4,4,2,0,0,0,0) by adding three boxes:

\begin{equation}\label{eq_nu}
\begin{Young}
&&&&&&$X$\cr
&&&&$X$\cr
&&&&$X$\cr
&\cr
\end{Young}
\end{equation}

If we denote the weight of the diagram without the X's by $\nu$ then we know that $\L(\la)$ is one of the irreducible constituents of $\L(\nu) \otimes \wedge^3(\bbC^8)$, as an $\SL_8$ representation.

It is worth noting that it is always possible to add or remove a ``vertical strip'' in this way so that the resulting diagram has even rows.

\begin{prop}\label{prop_so}
For any irreducible representation, $\L(\nu)$, of $\SL_n$ there exists $k$ such that
\[
    \mult\left( \wedge^k(\bbC^n): \Res^{\SL_n}_{\SO_n} \L(\nu) \right) = 1.
\]
Recall that the exterior powers of the defining representation of the orthogonal group are irreducible.
\end{prop}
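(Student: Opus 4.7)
My plan is to mirror the proof of Proposition~\ref{prop_sp}, substituting vertical strips for horizontal strips and Proposition~\ref{prop_CH_SO} for Proposition~\ref{prop_CH_Sp}. Since the $\SO_n$-invariant bilinear form on $\bbC^n$ makes every $\wedge^k(\bbC^n)$ self-dual as an $\SO_n$-module, Frobenius reciprocity yields
\[
    \mult_{\SO_n}\!\bigl(\wedge^k(\bbC^n) : \Res^{\SL_n}_{\SO_n} \L(\nu)\bigr) = \dim \bigl[\L(\nu) \otimes \wedge^k(\bbC^n)\bigr]^{\SO_n}.
\]
Decomposing the right-hand tensor product as an $\SL_n$-module via the Pieri rule for exterior powers gives $\bigoplus_\lambda \L(\lambda)$, with $\lambda$ ranging over partitions obtained from $\nu$ by adding a vertical strip of size $k$. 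By Proposition~\ref{prop_CH_SO}, each $\L(\lambda)^{\SO_n}$ is one-dimensional when all rows of $\lambda$ have even length and zero otherwise, so it suffices to exhibit a $k$ for which precisely one summand in the Pieri decomposition has all rows even.

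For this combinatorial step I would normalize $\nu$ to a partition with $\nu_n = 0$, let $p$ denote the number of odd rows among $\nu_1, \ldots, \nu_{n-1}$, and set $k := p$. A vertical strip adds at most one box to each row, so the only way to make every row length of the resulting $\lambda$ even is to insert a box in each odd row of $\nu$ (and none in row $n$, which is already even). Weak monotonicity of $\lambda$ follows automatically: between two adjacent rows of opposite parity the larger exceeds the smaller by at least $1$, while rows of equal parity receive the same update. The resulting $\lambda$ is therefore a genuine Pieri summand, and Proposition~\ref{prop_CH_SO} produces the required one-dimensional $\SO_n$-invariant.

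The main obstacle is ruling out a second even-row partition in the Pieri decomposition for this same $k$. The only alternative is a $\lambda$ whose row lengths all share the opposite (odd) parity, which forces $\lambda_n = 1$ and one new box in each even row of $\nu$, contributing $n - p$ boxes rather than $p$. Since Proposition~\ref{prop_so} is ultimately applied in Section~\ref{subsec_odd} only when $n$ is odd, the equation $p = n - p$ has no integer solution, so the even-row $\lambda$ is unique and the multiplicity equals exactly $1$.
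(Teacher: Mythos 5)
Your argument is essentially the paper's: the printed proof of Proposition \ref{prop_so} just invokes the symplectic case mutatis mutandis (Frobenius reciprocity via self-duality, the Pieri rule for a vertical strip, and the Cartan--Helgason criterion of Proposition \ref{prop_CH_SO}), which is exactly what you carry out, with the added value of making the choice $k=p$ and the uniqueness of the even-row summand explicit. Your closing observation is a genuine catch rather than a defect: for even $n$ with exactly $n/2$ odd rows the two admissible Pieri summands (all rows even, all rows odd --- which coincide as $\fsl_n$-weights modulo $(1,\dots,1)$ only in labeling, not as summands) both carry $\SO_n$-invariants, so the multiplicity is $2$ and the ``$=1$'' in the statement really does require $n$ odd (or a weakening to ``$\geq 1$''), a point the paper's one-line proof glosses over but which is harmless since the proposition is only applied for odd $n$ and only positivity is used.
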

\begin{proof}
Mutatis-Mutandis from Proposition \ref{prop_sp}.  Change the symplectic group to the (special) orthogonal group.  Change the evenness condition on columns to rows.  Change symmetric powers to exterior powers, and use the appropriate Pieri rule.
\end{proof}

\begin{rem}  The symmetric pair ($\SL_n, \SO_n$) corresponds to the split real form, $\text{GL}_n(\mathbb R)$ and therefore is a special case of \cite{BGG}.  We thank Gregg Zuckerman for reminding us of this reference.
\end{rem}

\subsection{The general case: even $n$}\label{subsec_even}

We reduce the problem for an arbitrary irrep. of $\fsl_n$ to the special case of binary forms.  First, we have the chain of embeddings
\[
    \fsl_2 \hookrightarrow \fsp_n \hookrightarrow \fsl_n.
\]
Upon restriction to $\fsp_n$ we apply Proposition \ref{prop_sp} to prove the existence of an $\fsp_n$-irrep. of the form $\S^m(\bbC^n)$.  We then observe that with respect to our principal $\fsl_2$-subalgebra $\fs$ we have $\bbC^n \cong \F_{d}$ for $n=d+1$.  The result follows for even $n$ from our analysis of the minimal $\fsl_2$-type in binary forms (Section \ref{sec_BinaryForms}).

\subsection{The general case: odd $n$}\label{subsec_odd}

Here, we reduce the problem for an arbitrary irrep. of $\fsl_n$ to the special case of exterior powers of $\fsl_2$-irreps.  First, we have the chain of embeddings
\[
    \fsl_2 \hookrightarrow \fso_n \hookrightarrow \fsl_n.
\]
Upon restriction to $\fso_n$ we apply Proposition \ref{prop_so} for $n$ odd to prove that there exists of an $\fsp_n$-irrep. of the form $\wedge^k(\bbC^n)$.  We then observe that with respect to our principal $\fsl_2$-subalgebra $\fs$ we have $\bbC^n \cong \F_{d}$ for $n=d+1$.

At this point we are faced with the problem of determining the lowest $\fsl_2$-type in $\wedge^k \F_d$

\subsubsection{The $\fsl_2$-character of an exterior power}

We have $n=d+1 \geq 3$.  It is an elementary fact that $\dim \wedge^k[\S^d(\bbC^2)] = \binom{d+1}{k}$,
and we then recognize the above as $\dim \S^k[\S^{(d+1-k)}(\bbC^2)]$.  In fact, the $q$-binomial coefficients are equal
\[
\binom{d+1}{k}_q = \binom{k+(d+1-k)}{k}_q,
\]
which implies
\[ \wedge^k(\F_d) \cong \S^k[\S^{d+1-k}(\bbC^2)].
\]
Clearly, if $k=0$ or $k=d+1$ then $\wedge^k(\F_d)$ is the trivial representation.  Moreover, if  $k=d$ or $k=1$ then $\wedge^k(\F_d) \cong \F_d$.  From Section \ref{sec_BinaryForms}, we know that if $2 \leq k < d$ then $2 \leq d+1-k \leq d-1$, and $\S^k[\S^{d+1-k}(\bbC^2)]$ therefore contains an $\fsl_2$-irrep. $\F_j$ for some $j \leq d+1-k$.  Since $d+1-k \leq d+1$ we are done.

\section{The case of a non-principal embedding}\label{sec_AnyEmb}

As before, let $n \geq 3$.  If $\iota: \fsl_2 \rightarrow \fsl_n$ is a homomorphism of Lie algebras then the defining representation, denoted here as $\bbC^n$, of $\fsl_n$ decomposes into irreps. of $\fsl_2$,
\[
    \bbC^n \cong \F_{d_1} \oplus \cdots \oplus \F_{d_p}
\] with $d_1 \geq \cdots \geq d_p \geq 0$, for some $p \geq 1$.  Let $n_j = d_j+1$ so that
$n=n_1 + \cdots + n_p$.  If all $n_j=1$ then $\iota$ is the zero map, otherwise it is an embedding -- as $\fsl_2$ is simple.

The image, $\iota(\fsl_2)$, is contained in a subalgebra of $\fsl_n$ isomorphic to $\fsl_{n_1} \oplus \cdots \oplus \fsl_{n_p}$.

If $p=1$ then this embedding is principal.  An arbitrary embedding is characterized by the integers $n_1 \geq \cdots \geq n_p$.  That is, we have
\[
    \iota: \fsl_2 \hookrightarrow \fsl_{n_1} \oplus \cdots \oplus \fsl_{n_p} \hookrightarrow \fsl_n.
\]

Upon restriction, an arbitrary finite dimension representation of $\fsl_n$ decomposes into irreps. of $\fsl_{n_1} \oplus \cdots \oplus \fsl_{n_p}$.  Each irreducible summand is equivalent to
\[
    L(\la^{(1)}) \otimes \cdots \otimes L(\la^{(p)})
\] where $L(\la^{(j)})$ is an irrep. of $\fsl_{n_j}$.

Upon further restriction to $\iota(\fsl_2)$ each tensor factor, $L(\la^{(j)})$, contains an irrep., $\F_{k_j}$, of dimension at most $n_j$, by Theorem \ref{thm_main}.  All irreducible $\fsl_2$-representations in the tensor product
\[
    \F_{k_1} \otimes \cdots \otimes \F_{k_p}
\] have dimension at most $n_1 + \cdots + n_p = n$.  The result follows.

\section{A view from the theory of generalized Harish-Chandra modules}\label{sec_motivation}

As was described in the introduction, one source of motivation for us is to generalize the results of \cite{BGG}.  There is another, related, point of view however.  If G is a real reductive group and $\mathcal H$ is an irreducible unitary representation of G, then there exists an underlying admissible Harish-Chandra module, $\mathcal M$, for the pair ($\fg, K$) where $\fg$ is the complexified Lie algebra of G and $K$ is the complexification of a  maximal compact subgroup of G.

The question arises:  Is $\mathcal M$ admissible for the pair ($\fg, S$) where S is small in K? If $\mathcal M$ is infinite dimensional then the answer is {\bf no}.  Thus, a construction of admissible generalized Harish-Chandra modules for ($\fg, S$) produces modules which are not admissible ($\fg, K$)-modules.

This point of view was the original motivation of the authors in \cite{WZ}.  See \cite{PZ} and \cite{PS} for further developments along these lines.  In \cite{WZ} a partial classification of small subalgebras was undertaken without paying attention to obtaining a sharp bound on the dimension of a lowest $\fs$-type.  It would be interesting to obtain bounds in general.

Beyond our methods, note that the geometry of the moment map can be used to approach these questions, see \cite{Webster}.

\begin{bibdiv}
\bibliographystyle{alphabetic}
\begin{biblist}
\bib{BGG}{article}{
   author={Bern{\v{s}}te{\u\i}n, I. N.},
   author={Gel{\cprime}fand, I. M.},
   author={Gel{\cprime}fand, S. I.},
   title={Models of representations of Lie groups},
   language={Russian},
   journal={Trudy Sem. Petrovsk.},
   number={Vyp. 2},
   date={1976},
   pages={3--21},
   issn={0321-2971},
   review={\MR{0453927}},
}
\bib{CM}{book}{
    author={Collingwood, David~H.},
    author={McGovern, William~M.},
     title={Nilpotent orbits in semisimple Lie algebras},
    series={Van Nostrand Reinhold Mathematics Series},
 publisher={Van Nostrand Reinhold Co.},
   address={New York},
      date={1993},
      ISBN={0-534-18834-6},
}
\bib{Dynkin-semisimple}{article}{
    author={Dynkin, E.~B.},
     title={Semisimple subalgebras of semisimple Lie algebras},
      date={1952},
   journal={Mat. Sbornik N.S.},
    volume={30(72)},
     pages={349\ndash 462 (3 plates)},
}
\bib{GW}{book}{
   author={Goodman, Roe},
   author={Wallach, Nolan R.},
   title={Symmetry, representations, and invariants},
   series={Graduate Texts in Mathematics},
   volume={255},
   publisher={Springer},
   place={Dordrecht},
   date={2009},
   pages={xx+716},
   isbn={978-0-387-79851-6},
   review={\MR{2522486}},
   doi={10.1007/978-0-387-79852-3},
}
\bib{GOV}{article}{
   author={Vinberg, {\`E}. B.},
   author={Gorbatsevich, V. V.},
   author={Onishchik, A. L.},
   title={Structure of Lie groups and Lie algebras},
   language={Russian},
   conference={
      title={Current problems in mathematics. Fundamental directions, Vol.\
      41 (Russian)},
   },
   book={
      series={Itogi Nauki i Tekhniki},
      publisher={Akad. Nauk SSSR, Vsesoyuz. Inst. Nauchn. i Tekhn. Inform.,
   Moscow},
   },
   date={1990},
   pages={5--259},
}
\bib{Howe-bin}{article}{
   author={Howe, Roger},
   title={{\it The classical groups} and invariants of binary forms},
   conference={
      title={The mathematical heritage of Hermann Weyl},
      address={Durham, NC},
      date={1987},
   },
   book={
      series={Proc. Sympos. Pure Math.},
      volume={48},
      publisher={Amer. Math. Soc.},
      place={Providence, RI},
   },
   date={1988},
   pages={133--166},
   review={\MR{974333 (90e:22022)}},
}
\bib{HTW}{article}{
   author={Howe, Roger E.},
   author={Tan, Eng-Chye},
   author={Willenbring, Jeb F.},
   title={Reciprocity algebras and branching for classical symmetric pairs},
   conference={
      title={Groups and analysis},
   },
   book={
      series={London Math. Soc. Lecture Note Ser.},
      volume={354},
      publisher={Cambridge Univ. Press, Cambridge},
   },
   date={2008},
   pages={191--231},
   review={\MR{2528468}},
   doi={10.1017/CBO9780511721410.011},
}
\bib{Kostant}{article}{
    author={Kostant, Bertram},
     title={The principal three-dimensional subgroup and the Betti numbers of a
  complex simple Lie group},
      date={1959},
   journal={Amer. J. Math.},
    volume={81},
     pages={973\ndash 1032},
}
\bib{PS}{article}{
   author={Penkov, Ivan},
   author={Serganova, Vera},
   title={Bounded simple $({\germ g},{\rm sl}(2))$-modules for ${\rm
   rk}\,{\germ g}=2$},
   journal={J. Lie Theory},
   volume={20},
   date={2010},
   number={3},
   pages={581--615},
   issn={0949-5932},
   review={\MR{2743105 (2011m:17020)}},
}
\bib{PZ}{article}{
   author={Penkov, Ivan},
   author={Zuckerman, Gregg},
   title={A construction of generalized Harish-Chandra modules for locally
   reductive Lie algebras},
   journal={Transform. Groups},
   volume={13},
   date={2008},
   number={3-4},
   pages={799--817},
   issn={1083-4362},
   review={\MR{2452616}},
   doi={10.1007/s00031-008-9034-9},
}
\bib{VW}{article}{
   author={van Groningen, Anthony},
   author={Willenbring, Jeb F.},
   title={The cubic, the quartic, and the exceptional group $\rm G_2$},
   conference={
      title={Developments and retrospectives in Lie theory},
   },
   book={
      series={Dev. Math.},
      volume={38},
      publisher={Springer, Cham},
   },
   date={2014},
   pages={385--397},
   review={\MR{3308792}}
}
\bib{Webster}{article}{
   author={Webster, Ben},
   title={Cramped subgroups and generalized Harish-Chandra modules},
   journal={Proc. Amer. Math. Soc.},
   volume={136},
   date={2008},
   number={11},
   pages={3809--3814},
   issn={0002-9939},
   review={\MR{2425719}},
   doi={10.1090/S0002-9939-08-09421-5},
}
\bib{WZ}{article}{
   author={Willenbring, Jeb F.},
   author={Zuckerman, Gregg J.},
   title={Small semisimple subalgebras of semisimple Lie algebras},
   conference={
      title={Harmonic analysis, group representations, automorphic forms and
      invariant theory},
   },
   book={
      series={Lect. Notes Ser. Inst. Math. Sci. Natl. Univ. Singap.},
      volume={12},
      publisher={World Sci. Publ., Hackensack, NJ},
   },
   date={2007},
   pages={403--429},
   review={\MR{2401818 (2009c:17009)}}
}
\end{biblist}
\end{bibdiv}
\end{document}